\documentclass[a4paper]{article}
\usepackage[utf8]{inputenc}
\usepackage{graphicx}
\usepackage{flushend}
\usepackage{amsmath}
\usepackage{amsthm}
\usepackage{nccmath}
\usepackage{amssymb}
\usepackage{enumerate}
\usepackage{subfig}
\usepackage{fancyhdr}
\usepackage[percent]{overpic}

\def\M{\mathbb{M}}
\def\r{\mathbb{R}}
\def\R{\mathbb{R}}
\def\t{\tau}
\def\h{\mathbb{H}}
\def\s{\mathbb{S}}
\def\k{\kappa}
\def\mx{\mathfrak{X}}
\def\mt{\mathbf{T}}
\def\la{\langle}
\def\ra{\rangle}
\def\nt{\mathbf{t}}
\def\te{\theta}
\def\me{\mathbb{M}^2(\kappa)}
\def\mer{\mathbb{M}^2 (\kappa)\times \mathbb{R}}

\newcommand{\zb}{\bar{z}}
\newcommand{\hmf}{\mathbb{E}(\kappa , \tau)}

\newcommand{\abs}[1]{\left\vert #1 \right\vert}
\newcommand{\set}[1]{\left\{#1\right\}}
\newcommand{\meta}[2]{\langle #1,#2 \rangle }
\theoremstyle{plain}
 \newtheorem{definition}{Definition}
 \newtheorem{theorem}{Theorem}
 \newtheorem{example}{Example}
 \newtheorem{prop}{Proposition}
 \newtheorem{remark}{Remark}
 \newtheorem{lema}[prop]{Lema}

 \newtheorem{Corollary}{Corollary}

\begin{document}
%\begin{center}\rule{0.9\textwidth}{0.1mm} \end{center}
\title{Capillary Immersions in $\hmf$.}
\author{Haimer A. Trejos}
\date{}
\maketitle
%\begin{center}\rule{0.9\textwidth}{0.1mm} \end{center}

\begin{abstract}
In \cite{AEG3} and \cite{ER4} the authors showed the existence of a  Codazzi pair defined on any constant mean curvature surface in the homogeneous spaces $\hmf$ associated to the Abresch-Rosenberg differential. In this paper, we use the mentioned Codazzi pair to classify capillary disks in $\hmf$. As a consequence,  the results presented in this paper generalize the previous classification of  constant mean curvature disks in the product spaces $\mathbb{S}^{2} \times \mathbb{R}$ and $\mathbb{H}^{2} \times \mathbb{R}$ in \cite{dCF} and \cite{Cal}. 
\end{abstract}

\section{Introduction.}

Heinz Hopf in \cite{Ho} showed that any immersed constant mean curvature topological sphere in $\mathbb{R}^{3}$ must be a round sphere. To do so, he introduced a quadratic differential on any constant mean curvature surface, the Hopf differential, that turns out to be holomorphic for the conformal structure defined by the first fundamental of the surface and he observed that the zeroes of this differential coincide  with the umbilical points of the surface. Finally, using that any holomorphic quadratic differential defined on a topological sphere must be the trivial one, the proof follows from the classification of umbilical surfaces in $\mathbb{R}^{3}$.

It should be remarked that the Codazzi equation of a constant mean curvature surface (or $H-$ surface) in $\mathbb{R}^{3}$ implies that the Hopf differential is holomorphic. Hence, as the Codazzi equation is the same in every 3-dimensional space form, we can extend the Hopf Theorem to any space form.\\

Regarding $H-$ surfaces with non-empty smooth boundary, J.C. Nistche in \cite{Ni} used the Hopf differential to classify free boundary constant mean curvature disks (or $H-$ disks) in the Euclidean ball of $\mathbb{R}^{3}$. He showed that the boundary is a line of curvature. Then, he used the Schwarz Reflection Principle to concluded that  the Hopf differential vanishes on any free boundary $H-$ disk and hence the disk  must be totally umbilical. 

When the boundary is a piece-wise regular curve, J. Choe in \cite{Ch} extended Nitsche's classification when the number of singular points is finite and some additional conditions on the angles of the vertices given by these singular points. 

Recently, A. Fraser and R. Schoen in \cite{FS} showed that any two-dimensional free boundary immersion with parallel mean curvature in the Euclidean ball of $\mathbb{R}^{n}$ must be totally umbilical. They proved that the complex quartic differential obtained by squaring the Hopf differential of the surface is holomorphic.\\

The above classifications results show that the existence of a holomorphic quadratic differential is a useful tool to classify $H-$ surfaces of $\mathbb{R}^{3}$. Indeed, the underlying idea for the construction of the Hopf differential relies in the Codazzi pair that can be defined on the surface, for example, on a constant mean curvature surface in $\mathbb{R}^{3}$, the Codazzi pair is obtained by the first and the second fundamental form. Under some geometrical conditions, a Codazzi pair gives rise to holomorphic quadratic differential on the surface.

For surfaces in the homogeneous spaces with 4-dimensional isometry group $\hmf$, U. Abresch and H. Rosenberg in \cite{Rosen} showed the existence of a holomorphic quadratic differential, the Abresch-Rosenberg differential, on any $H-$ surface. Hence, they classified the topological spheres with constant mean curvature as the rotationally symmetric ones.

In the same spirit of J. Nitsche, I. Fernández and M. Do Carmo used the Abresch-Rosenberg differential to classify $H-$disks in the product spaces $\mathbb{S}^{2} \times \mathbb{R}$ and $\mathbb{H}^{2} \times \mathbb{R}$; they proved that the $H-$ disk that meets an Abresch-Rosenberg surface, i.e, those whose Abresch-Rosenberg differential along its boundary at a constant angle is part of an Abresch-Rosenberg surface under certain geometrical conditions. Similar results have been obtained by M. P. Cavalcante and J. Lira in \cite{Cal}, they proved that a $H-$disk in the product spaces that meets a slice at a constant angle is a cap.\\ 

The aim of this paper is to classify $H-$ disks in any homogeneous space $\hmf$. Recently, in \cite{ER4} the authors defined a Codazzi pair $(I,II_{AR})$ such that the $(2,0)$- part of $II_{AR}$ is the Abresch-Rosenberg differential for a $H-$ surface in $\hmf$. Then, we use the mentioned Codazzi pair to establish a Joachimstahl's Type Theorem for the intersection of $H-$ surfaces, in this way, we can use the Abresch-Rosenberg differential to  classify $H-$ disks in $\hmf$.

As a consequence, we will show that our results generalize the previous classification of $H-$disks in the product spaces $\mathbb{S}^{2} \times \mathbb{R}$ and $\mathbb{H}^{2} \times \mathbb{R}$ by I. Fernández and M. Do Carmo and we will get a classification for $H-$ disks in  $\hmf$, when $\t \neq 0$.\\ 

The content of this paper is organized as follows: 

In section 2, we set up the notation that will be used along this paper and we review standard facts about $H-$surfaces in $\hmf$, as well as, the abstract theory of the Codazzi pairs in Riemannian surfaces. Finally, we recall the definition of the Abresch-Rosenberg shape operator for a $H-$ surface in $\hmf$ and its properties.\\

In section 3, we define the lines of curvature respect to the Abresch-Rosenberg shape operator  and we show the Key Lemma of this work. In fact, the Key Lemma is a Joachimstalh's Type Theorem for the intersection of $H-$ surfaces in $\hmf$. Later, we prove that natural geometric configurations about the intersection of the $H-$ surfaces imply the conditions of the Key Lemma.\\

Finally, in section 4, we apply the Key Lemma to classify $H-$ disks in $\hmf$ that meet an Abresch-Rosenberg surface along its boundary at a constant angle. First, we classify these types surfaces with smooth boundary and assuming certain geometrical conditions. After, we extend the previous classification to piece-wise differentiable boundary case assuming that the number of singular points is finite and additional suppositions on the angles of the vertices by using a general result about Codazzi pairs in \cite{ER3}.

{\bf Acknowledgements}. I am grateful to professor José María Espinar  for his constant encouragement and support to prepare this paper. Also, I would like to thank the warm welcome by the people at the Instituto de Matematica Pura e Aplicada (IMPA) where this work was initiated and the Universidad Federal Fluminense (UFF) for its hospitality, where this paper was concluded.

\section{Preliminaries}
In this preliminary section, we summarize  the general theory of $H-$ surfaces in homogeneous Riemannian manifolds and the abstract theory of Codazzi pairs in Riemannian surfaces, for this we follow the references \cite{AEG3,D,ER,ER2,Scott}. 

\subsection{Homogeneous Riemanniann Manifolds $\mathbb{E}(\kappa ,\tau)$.}

The simply connected homogeneous Riemannian manifolds whose isometry group have dimension 4 are the three dimensional Riemannian manifolds  that fiber over a simply connected two dimensional manifolds space form $\mathbb{M}^{2}(\k)$ and its fibers are the trajectories of a unitary Killing vector field $\xi$. According to standard notation, we embrace these spaces as $\mathbb{E}(\kappa ,\tau)$, where $\kappa$ and $\tau$ are constants so that  $\kappa - 4\tau^{2} \neq 0$.  

In fact, the homogeneous manifolds $\hmf$ can be classified  depending of numbers $\kappa$ and $\tau$ (\cite{Scott}); if $\tau = 0$,  $\hmf$ is the product space $\mer$, where $\me = \s ^2(\kappa)$ if $\kappa >0$ ($\s ^2(\kappa)$ the sphere of constant curvature $\kappa$), or $\me  =  \h ^2(\kappa)$ if $\kappa < 0$ ($\h^2(\kappa)$ the hyperbolic plane of constant curvature $\kappa$).  If $\tau \neq 0$, $\hmf $ is either a Berger sphere if $\kappa > 0$, or a Heisenberg space if $\kappa = 0$, or the universal cover of ${\rm PSL}(2,\r)$ if $\kappa < 0$. Henceforth we will suppose $\kappa$ is plus or minus one or zero. 

As we said, The homogeneous space $\hmf $ is a Riemannian submersion $\pi: \hmf \rightarrow \M^{2}(\kappa )$ over a simply connected surface of constant sectional curvature $\kappa$. The fibers are the inverse image of a point at $\M^{2}(\kappa)$ by $\pi$. The fibers are the trajectories of a unitary Killing field $\xi$, called the vertical vector field.

Denote by $\overline{\nabla}$ the Levi-Civita connection of $\hmf$, then for all $X \in \mx (\hmf )$, the following equation holds \cite{Scott}:
$$\overline{\nabla}_{X}\xi = \t  X \wedge \xi ,$$where $\t$ is the bundle curvature. 

\subsection{Immersed surfaces in $\hmf$.}

Let $\Sigma \subset \hmf$ be an oriented immersed connected surface. We endow $\Sigma$ with the induced metric of $\hmf$, the first fundamental form, which we  denote by $\meta{}{}$. Denote by $\nabla$, $R$ and $A$ the Levi-Civita connection, the Riemann curvature tensor and the shape operator of $\Sigma$ respectively. So, $$AX = -\overline{\nabla}_{X}N  \text{ for all }X \in \mx(\Sigma), $$where $N$ is the unit normal vector field along the surface $\Sigma$. Then $II(X,Y) = \langle AX,Y \rangle$ is the  second fundamental Form of $\Sigma$. 

Moreover, denote by $J$ the oriented rotation of angle $\frac{\pi}{2}$ on $T\Sigma$, defined by the formula 
$$JX = N \wedge X \text{ for all } X \in \mx(\Sigma) ,$$ 

where $\wedge$ denotes the wedge product of vector fields. Set $\nu = \langle N,\xi\rangle$ and $\mt = \xi - \nu N$, that is, $\nu$ is the normal component of the vertical vector field $\xi$, called the angle function, and $\mt$ is the tangent component of the vertical vector field $\xi$.

In terms of a local conformal parameter $z$, the first fundamental form $I = \la,\ra$ and the second fundamental form are given by 

\begin{eqnarray}
I & = & 2\lambda |dz|^{2} \\
II & = & Q dz^{2} + 2 \lambda H |dz|^{2} + \overline{Q}d\overline{z}^{2},
\end{eqnarray}where $Q dz^{2} = - \la \overline{\nabla}_{\partial_{z}}N,\partial_{z} \ra$ is the usual Hopf differential of $\Sigma$. Hence, in this conformal coordinate, we have the following:

\begin{lema}[\cite{ER,ER2}]
\label{LemaEQ}
Given an immersed $H-$surface $\Sigma \subset \hmf$, the following equations are satisfied: 
\begin{eqnarray}
K & = & K_{e} + \t^{2} + (\kappa - 4\t^{2})\nu^{2} \label{GaussZ}\\
Q_{\overline{z}} & = & \lambda(\kappa - 4\t^{2})\nu \nt \label{CodazziZ}\\
\nt_{z} & = & \frac{\lambda_{z}}{\lambda}\nt + Q\nu \label{NablaTZ}\\
\nt_{\overline{z}} & = & \lambda(H + i \t) \nu \label{NablaTZb}\\
\nu_{z} & = & -(H - i\t)\nt -\frac{Q}{\lambda}\overline{\nt}  \label{DerNuZ} \\
|\nt |^{2} & = & \frac{1}{2} \lambda(1-\nu^{2}) \label{normZ},
\end{eqnarray}where $\nt = \la \mt, \partial_{z} \ra$, $\overline{\nt} = \la \mt, \partial_{\overline{z}} \ra$, $K_{e}$ is the extrinsic curvature and $K$ is the Gaussian curvature of $\Sigma$.
\end{lema}

For an immersed $H-$surface $\Sigma \subset \hmf$, we can define a global quadratic differential.

\begin{definition}[\cite{Rosen,Rosen1}]\label{DefAR}
Given a local conformal parameter $z$ for $I$, the Abresch-Rosenberg differential is defined by:
$$\mathcal{Q}^{AR} = Q^{AR}dz^{2} = (2(H+i \t)Q - (\k - 4\t^{2})\nt^{2})dz^{2}.$$ 

Note that $\mathcal{Q}^{AR}$ do not depend on the conformal parameter $z$, hence $\mathcal{Q}^{AR}$ is globally defined on $\Sigma$.

\end{definition}

Therefore using the definition of the Abresch-Rosenberg differential and Lema \ref{LemaEQ}, we obtain

\begin{theorem}[\cite{Rosen,Rosen1}]
\label{thQH}
Let $\Sigma$ be a $H$- surface in $\hmf$, then the Abresch-Rosenberg differential $\mathcal{Q}^{AR}$ is holomorphic for the conformal structure induced by the first fundamental form $I$.
\end{theorem}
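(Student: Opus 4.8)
The plan is to show that $\bar\partial Q^{AR} = (Q^{AR})_{\bar z}\, dz^2\otimes d\bar z = 0$ by direct computation, using only the structure equations collected in Lema \ref{LemaEQ} together with the fact that $H$ and $\t$ are constants. Writing $Q^{AR} = 2(H+i\t)Q - (\k-4\t^2)\nt^2$, I would differentiate with respect to $\bar z$, pulling the constant factors $2(H+i\t)$ and $(\k-4\t^2)$ outside the derivative, to obtain
\begin{equation}
(Q^{AR})_{\bar z} = 2(H+i\t)\,Q_{\bar z} - (\k-4\t^2)\,2\nt\,\nt_{\bar z}.
\end{equation}

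Next I would substitute the two relevant structure equations: equation \eqref{CodazziZ}, which gives $Q_{\bar z} = \lambda(\k-4\t^2)\nu\,\nt$, and equation \eqref{NablaTZb}, which gives $\nt_{\bar z} = \lambda(H+i\t)\nu$. Plugging these in yields
\begin{equation}
(Q^{AR})_{\bar z} = 2(H+i\t)\,\lambda(\k-4\t^2)\nu\,\nt - 2(\k-4\t^2)\,\nt\,\lambda(H+i\t)\nu,
\end{equation}
and the two terms are manifestly identical, so the difference vanishes. Hence $(Q^{AR})_{\bar z}\equiv 0$, which is exactly the statement that $\mathcal{Q}^{AR}$ is holomorphic for the conformal structure induced by $I$. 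It remains to recall, as already noted in Definition \ref{DefAR}, that $\mathcal{Q}^{AR}$ is independent of the choice of conformal parameter, so the local vanishing of $(Q^{AR})_{\bar z}$ in every chart is a well-defined global condition, and holomorphicity is a conformally invariant notion; this makes the computation chart-independent and completes the argument.

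There is essentially no serious obstacle here: the only mild subtlety is bookkeeping with the complex constants $H+i\t$ and $\k-4\t^2$ and making sure the factor of $2$ from differentiating $\nt^2$ is tracked correctly, so that the cancellation is exact rather than merely up to sign. The use of constancy of $H$ (and of $\t$, which is constant by construction of $\hmf$) is essential — it is precisely what allows \eqref{CodazziZ} to be read as a statement about $Q_{\bar z}$ alone rather than about a more complicated combination. One could alternatively phrase the whole computation invariantly in terms of the Codazzi pair $(I, II_{AR})$ mentioned in the introduction, deducing holomorphicity of the $(2,0)$-part from the Codazzi equation for that pair, but the direct substitution above is the most transparent route given the equations already available in Lema \ref{LemaEQ}.
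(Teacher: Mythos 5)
Your computation is correct and is exactly the argument the paper intends: it derives the theorem directly from Definition \ref{DefAR} together with equations \eqref{CodazziZ} and \eqref{NablaTZb} of Lema \ref{LemaEQ}, with the two terms cancelling identically. No differences to report.
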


In the following we recall the classification of the complete $H-$surfaces in $\hmf$ whose the Abresch-Rosenberg differential vanish. 

\begin{theorem}[\cite{Rosen,Rosen1,ER2}]
\label{QVa}
Let $\Sigma \subset \hmf$ be a complete $H-$ surface whose Abresch-Rosenberg differential vanishes. Then $\Sigma$ is invariant by a one  parameter group of isometries of $\hmf$. Moreover, $\Sigma$ is either a slice in $\mathbb{S}^{2} \times \R$ or $\mathbb{H}^{2} \times \R$ if $H = 0 = \t$ and in the case $H^{2}+\t^{2} \neq 0$ the Gauss curvature $K$ of these examples satisfies:

\begin{itemize}
\item If either $4(H^{2} + \t^{2}) > \k-4\t^{2}$ when $\k-4\t^{2}>0$ or $H^{2} + \t^{2}> -(\k-4\t^{2})$ when $\k-4\t^{2}<0$, then $K>0$, i.e, $\Sigma$ is a rotationally invariant sphere. In particular, $4H^{2} + \k >0$ .

\item If $4H^{2} + \k = 0$ and $\nu = 0$, then $K= 0$, i.e $\Sigma$ is either a vertical plane in ${\rm Nil}_{3}$ or a vertical cylinder over a horocycle in $\mathbb{H}^{2} \times \R$ or $\widetilde{{\rm PSL}(2,\r)}$.

\item There exists a point with negative Gauss curvature in the rest of examples.
\end{itemize}
\end{theorem}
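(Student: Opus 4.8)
The plan is to follow the classical Abresch--Rosenberg argument, organized entirely around the angle function $\nu=\la N,\xi\ra$; throughout I write $c=\kappa-4\t^{2}\neq 0$ and $D=H^{2}+\t^{2}$. First I would dispose of the case $H=\t=0$: there the vanishing of $\mathcal{Q}^{AR}$ reads $-\kappa\,\nt^{2}=0$, and since $\kappa=\pm 1$ this forces $\nt\equiv 0$, i.e. $\mt\equiv 0$ and $\xi=\nu N$. Hence $T\Sigma$ is everywhere horizontal, so $\Sigma$ is an integral leaf of the (integrable) horizontal distribution of $\mathbb{M}^{2}(\kappa)\times\mathbb{R}$ and, being complete, a slice. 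From now on $D\neq 0$. Vanishing of $\mathcal{Q}^{AR}$ gives $Q=\dfrac{c\,\nt^{2}}{2(H+i\t)}$, and plugging this together with \eqref{normZ} into \eqref{DerNuZ} produces the key identity
$$\nu_{z}=-f(\nu)\,\nt,\qquad f(\nu)=(H-i\t)+\frac{c(1-\nu^{2})}{4(H+i\t)}=\frac{4D+c(1-\nu^{2})}{4(H+i\t)};$$
one checks in passing that for this $Q$ the Codazzi equation \eqref{CodazziZ} reduces to \eqref{NablaTZb}, so nothing is lost. Conjugating and using $|\nt|^{2}=\tfrac12\lambda(1-\nu^{2})$, the gradient of $\nu$ for the induced metric satisfies
$$|\nabla\nu|^{2}=(1-\nu^{2})\,|f(\nu)|^{2}=\frac{(1-\nu^{2})\big(4D+c(1-\nu^{2})\big)^{2}}{16D},$$
a function of $\nu$ alone.

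The second step, which I expect to be the main obstacle, is to upgrade this to genuine ambient symmetry. Because $|\nabla\nu|^{2}$ depends only on $\nu$, the level sets of $\nu$ are equidistant curves: on the open set $\{\nabla\nu\neq 0\}$ one sets up Fermi-type coordinates $(u,v)$ with $u$ a reparametrisation of arclength along the gradient flow of $\nu$ (so that $\nu$, $|\nabla\nu|$ and $\lambda$ solve ODEs in $u$) and $v$ a parameter on the curves $\{\nu=\mathrm{const}\}$; the remaining structure equations \eqref{NablaTZ}--\eqref{DerNuZ} then force the geodesic curvature of the level curves and the second fundamental form to depend on $u$ only. Integrating this system and comparing with the rotationally (more generally, screw-motion) invariant $H$-surfaces of $\hmf$ built in \cite{Rosen,Rosen1} identifies $\Sigma$, on $\{\nabla\nu\neq 0\}$, with one of those examples. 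The complementary set is $\{\nabla\nu=0\}=\{\nu^{2}=1\}\cup\{f(\nu)=0\}$: it is either a discrete set (the vertices/poles of a rotationally invariant sphere) or an open set on which $\nu$ equals the constant root of $f$, and in both cases real-analyticity of $\Sigma$ propagates the conclusion to all of $\Sigma$. Thus $\Sigma$ is invariant under a one-parameter group of isometries of $\hmf$ and is exactly one of the Abresch--Rosenberg examples; everything on either side of this step is computation.

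Finally I would read off the Gauss curvature. Since $\det A=H^{2}-|Q|^{2}/\lambda^{2}$ in a conformal parameter, $\mathcal{Q}^{AR}=0$ together with $|\nt|^{2}=\tfrac12\lambda(1-\nu^{2})$ gives the pointwise formula, valid on any such $\Sigma$,
$$K=D+c\,\nu^{2}-\frac{c^{2}(1-\nu^{2})^{2}}{16D}=D+c-c\,s-\frac{c^{2}}{16D}\,s^{2},\qquad s:=1-\nu^{2}\in[0,1].$$
As a function of $s$ this is a downward-opening parabola, hence positive on all of $[0,1]$ precisely when it is positive at the endpoints, where $K|_{s=0}=D+c$ and $K|_{s=1}=\dfrac{(4D-c)(4D+c)}{16D}$, with $4D+c=4H^{2}+\kappa$ and $4D-c=4H^{2}+8\t^{2}-\kappa$. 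A short case distinction shows that when $c>0$ one automatically has $D+c>0$ and $4H^{2}+\kappa>0$, so positivity at both endpoints is equivalent to $4D>c$; and when $c<0$ the condition $D+c>0$ (that is, $H^{2}+\t^{2}>-(\kappa-4\t^{2})$) already forces $4H^{2}+\kappa>0$, hence positivity at both endpoints. These are exactly the hypotheses of the first bullet, and there $K>0$ along the whole range of $\nu$; then $\Sigma$ is complete with $K$ bounded below by a positive constant, so Bonnet--Myers makes it compact, and by the previous step a compact rotationally invariant $H$-surface is the Abresch--Rosenberg sphere (and $4H^{2}+\kappa>0$ has just been recorded). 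In the degenerate flat case $4H^{2}+\kappa=0$ (equivalently $c=-4D<0$) with $\nu\equiv 0$, the formula yields $K\equiv 0$, and the $\nu\equiv 0$ rotational $H$-surface is a vertical plane in ${\rm Nil}_{3}$ when $\kappa=0$, or a vertical cylinder over a horocycle in $\mathbb{H}^{2}\times\mathbb{R}$ or $\widetilde{{\rm PSL}(2,\r)}$. In the remaining cases the range of $\nu$ on the corresponding (non-compact, or constant-angle) example includes a value of $s$ at which the parabola is negative — for instance $K=4H^{2}+\kappa<0$ at a non-polar critical point of $\nu$, or $K=D+c<0$ at a pole — so $\Sigma$ has a point with $K<0$.
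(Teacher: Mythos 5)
This statement is quoted in the paper with citations to \cite{Rosen,Rosen1,ER2}; the paper itself contains no proof of it, so there is nothing internal to compare your argument against. Judged on its own terms, your first and third steps are correct and are exactly the standard computations: setting $c=\kappa-4\t^{2}$ and $D=H^{2}+\t^{2}$, the vanishing of $\mathcal{Q}^{AR}$ does give $Q=c\,\nt^{2}/(2(H+i\t))$, substitution into \eqref{DerNuZ} together with \eqref{normZ} does yield $\nu_{z}=-f(\nu)\nt$ with your $f$, hence $|\nabla\nu|^{2}=(1-\nu^{2})|f(\nu)|^{2}$, and the formula $K=D+c\nu^{2}-c^{2}(1-\nu^{2})^{2}/(16D)$ follows from \eqref{GaussZ} and $K_{e}=H^{2}-|Q|^{2}/\lambda^{2}$. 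The endpoint analysis of the concave parabola in $s=1-\nu^{2}$, the identities $4D+c=4H^{2}+\kappa$ and $K|_{s=1}=(4D-c)(4D+c)/(16D)$, and the Bonnet--Myers conclusion in the first bullet are all sound.

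The genuine gap is the second step, which is the actual content of the theorem: you assert, but do not prove, that the level sets of $\nu$ are orbits of a one-parameter isometry group and that integrating the structure equations identifies $\Sigma$ with one of the invariant examples. ``Set up Fermi coordinates, everything depends on $u$ only, integrate and compare'' is a plan, not a proof; one must actually show that the full set of data $(\lambda,\nu,\nt,Q)$ is determined along the gradient flow by an ODE system whose solutions are precisely the catalogued examples, and handle separately the degenerate set $\{\nabla\nu=0\}$ --- in particular the case where $\nu$ is constant on an open set, i.e. $f(\nu)\equiv 0$, which is exactly where the flat examples $4H^{2}+\kappa=0$, $\nu\equiv 0$ live, and the case $\nu^{2}\equiv 1$. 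This occupies the bulk of \cite{Rosen,Rosen1,ER2} and cannot be waved through by ``real-analyticity propagates the conclusion.'' A second, smaller defect is the last bullet: at the borderline $4(H^{2}+\t^{2})=\kappa-4\t^{2}>0$ your parabola factors as $K=\frac{D}{16D/c^{2}}\cdot\frac{c^2}{16D}(s+5)(1-s)\ge 0$ on $[0,1]$ (indeed $K=D(5+s)(1-s)/4\cdot(4D/c)^{\,\cdot}$ --- in any case $K\ge 0$ with equality only at $s=1$), so the claim that ``the range of $\nu$ includes a value of $s$ at which the parabola is negative'' fails there; establishing the third bullet requires inspecting the explicit examples case by case rather than reading it off the sign of the quadratic.
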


Then the above theorem motivates the following definition

\begin{definition}\label{Def:ARSurface}
A complete $H-$ surface $\Sigma$ immersed in $\hmf$ is an  Abresch-Rosenberg surface if the Abresch-Rosenberg differential $\mathcal{Q}^{AR}$ vanishes on $\Sigma$. In particular by Theorem \ref{QVa}, the surface $\Sigma$ must be invariant by a one parameter group of isometries of $\hmf$.
\end{definition}

In Figure 1, we show the meridians of the  Abresch-Rosenberg surfaces in the product spaces $\mathbb{S}^{2} (\kappa) \times \R$ and $\mathbb{H}^{2}(\kappa) \times \mathbb{R}$. In \cite{Rosen}, the authors gave a complete description of these surfaces.

\begin{figure}[!ht]
    \subfloat[Meridians of CMC spheres $S^{2}_{H}$ in \newline $\mathbb{S}^{2}(\kappa) \times \mathbb{R}$ ]{%
      \begin{overpic}[width=0.50\textwidth]{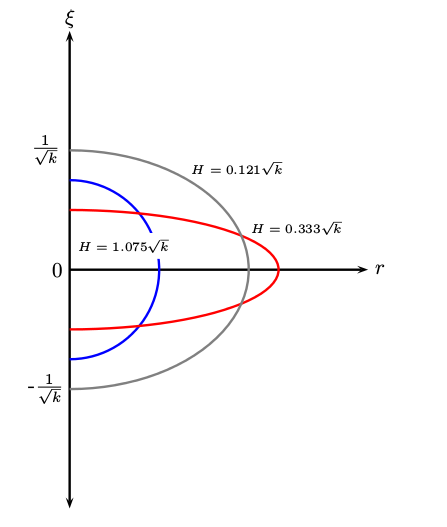}
      
      \end{overpic}
    }
    \subfloat[Meridians of the CMC surfaces $C^{2}_{H}$ of catenoidal type in $\mathbb{H}^{2}(\kappa) \times \mathbb{R}$]{%
      \begin{overpic}[width=0.50\textwidth]{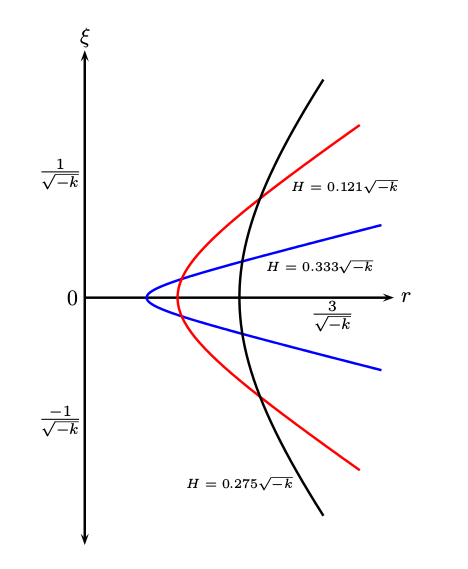}
      
      \end{overpic}
    }
    
\subfloat[Meridians of CMC
spheres $S^{2}_{H} $ and \newline  disk-like cmc surfaces $D_H^{2}$  in $\mathbb{H}^{2}(\kappa) \times \mathbb{R}$]{%
      \begin{overpic}[width=0.50\textwidth]{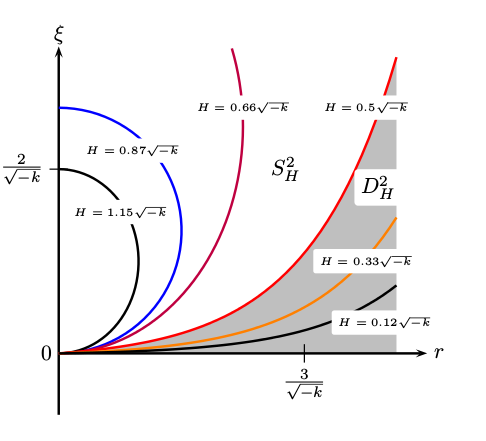}
      
      \end{overpic}
    }
    \subfloat[ Parabolic CMC surfaces $P_{H}^{2}$ come as limits in $\mathbb{H}^{2}(\kappa) \times \mathbb{R}$]{%
      \begin{overpic}[width=0.50\textwidth]{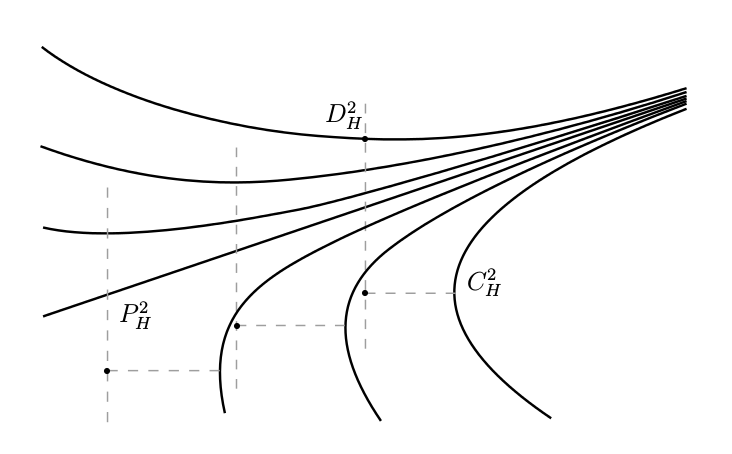}
      
      \end{overpic}    
    }
    \caption{ Meridians of CMC surfaces in the product spaces $\mathbb{M}^{2}(\kappa) \times \mathbb{R}$ whose the Abresch-Rosenberg differential vanishes}
  \end{figure}

\subsection{Codazzi Pairs on Surfaces.}

One important tool in this paper is Codazzi pairs. We follow \cite{AEG3} and references therein. We shall denote by $\Sigma$ an orientable (and oriented) smooth Riemannian surface. Otherwise we would work with its oriented two-sheeted covering.

\begin{definition}
A fundamental pair on $\Sigma$ is a pair of real quadratic forms
$(I,II)$ on $\Sigma$, where $I$ is a Riemannian metric.
\end{definition}

Associated with a fundamental pair $(I,II)$ we define the shape operator $S$ of the pair as:
\begin{equation}\label{ii}
 II(X,Y)=I(S(X),Y) \text{ for any } X,Y \in \mx (\Sigma).
\end{equation}

Conversely, it is clear from (\ref{ii})  that the quadratic form $II$ is totally
determined by $I$ and $S$. In other words, to give a fundamental pair on $\Sigma$ is equivalent to give a Riemannian metric on $\Sigma$ together with a self-adjoint endomorphism $S$.

We define the  mean curvature $H(I,II)$, the extrinsic curvature $K(I,II)$ and the  principal curvatures of the fundamental pair $(I,II)$ as one half of the trace, the determinant and the eigenvalues of the endomorphism $S$, respectively.

Now, consider $\Sigma$ as a Riemann surface with respect to the metric $I$ and take a local conformal parameter $z$, then we can write
\begin{equation}\label{parholomorfo}
\begin{split} 
I &=2\lambda\,|dz|^2, \\
II &=Q(I,II)\,dz^2+2\lambda\,H\,|dz|^2+\overline{Q}(I,II)\,d\zb^2.
\end{split}
\end{equation}

The quadratic form $Q(I,II)\,dz^2$, which does not depend on the chosen parameter $z$, is known as the  Hopf differential of the pair $(I,II)$. A specially interesting case happens when the fundamental pair satisfies the Codazzi equation, that is,

\begin{definition}
We say that a fundamental pair $(I,II)$, with shape operator $S$, is a
Codazzi pair if
\begin{equation}\label{ecuacioncodazzi}
\nabla_XSY-\nabla_YSX-S[X,Y]=0,\qquad X,Y\in\mx(\Sigma),
\end{equation}
where $\nabla$ stands for the Levi-Civita connection associated with the Riemannian metric $I$ and $\mx (\Sigma)$ is the set of smooth vector fields on $\Sigma$.
\end{definition}

Let us also observe that, by equations (\ref{parholomorfo}) and (\ref{ecuacioncodazzi}), we obtain that the fundamental pair $(I,II)$ is a Codazzi pair if and only if
$$
Q(I,II)_{\zb}=\lambda\,H(I,II)_z.
$$

Thus, one has the T.K Milnor Theorem:
\begin{lema}[\cite{Mi}]\label{l0.1}
Let $(I,II)$ be a fundamental pair. Then, any two of the conditions {\rm (i)}, {\rm (ii)}, {\rm (iii)} imply the third:
\begin{itemize}
\item[{\rm (i)}] $(I,II)$ is a Codazzi pair.
\item[{\rm (ii)}] $H(I,II)$ is constant.
\item[{\rm (iii)}] The Hopf differential $Q(I,II)\,dz^2$ of the pair $(I,II)$ is holomorphic.
\end{itemize}
\end{lema}

\subsection{The Abresch-Rosenberg shape operator}

Now, we  recall the definition of the Codazzi pair on any $H-$surface in $\hmf$ such that the Abresch-Rosenberg differential appears as its Hopf differential.  First, we study the case $\t = 0$. After, we study the case $\t \neq 0$.

\subsection{$H-$surfaces in $\mer$.}

Consider a complete immersed $H-$surface $\Sigma \subset \mathbb{M}^{2}(\kappa) \times \mathbb{R}$. According to the notation introduced above, we define the self-adjoint endomorphism $S$ along $\Sigma$ as
\begin{equation}
S_{AR}X = 2H \, AX - \kappa \langle X,\mt \rangle \mt + \frac{\kappa}{2}\abs{\mt}^{2}X - 2H^{2}X,
\label{Oper}
\end{equation}where $X \in \mathfrak{X}(\Sigma)$. Now consider the quadratic form $II_{AR}$ associated to $S_{AR}$  given by \eqref{Oper}. In \cite{AEG3}, it was shown that $(I,II_{AR})$ is a Codazzi pair on $\Sigma$ when $H$ is constant. Moreover, it is traceless, i.e., ${\rm tr}(S_{AR}) = 0= H(I,II_{AR})$, and the Hopf differential associated to $(I,II_{AR})$ is the Abresch-Rosenberg differential $\mathcal Q^{AR}$ in $\mathbb{M}^{2}(\kappa) \times \R$.

\subsection{$H-$surfaces in $\hmf$ with $\t \neq 0$.}

Let $\Sigma$ be a $H-$surface in $\hmf$, $\t \neq 0$. In this case we have that $H^{2} + \t^{2} > 0$. According to the notation introduced above, we define the self-adjoint endomorphism $S$ along $\Sigma$ as 
\begin{equation}\label{ARTraceless}
S_{AR}X = A(X) - \alpha \langle \mt_{\theta}, X \rangle \mt_{\theta} + \frac{\alpha \abs{\mt}^{2}}{2}X - HX,
\end{equation} where 
\begin{itemize}
\item $\alpha = \dfrac{\k - 4\t^{2}}{2 \sqrt{H^{2} + \t^{2}}}$,
\item $e^{2i\te} = \frac{H - i\t}{\sqrt{H^{2} + \t^{2}}}$ and 
\item $\mt_{\te} = \cos\te \mt + \sin\te J\mt$.
\end{itemize}

Now, consider the quadratic form $II_{AR}$ associated to $S_{AR}$  given by \eqref{ARTraceless}. In \cite{ER4}, it was shown that $(I,II_{AR})$ is a Codazzi pair on $\Sigma$ when $H$ is constant. Moreover, it is traceless and the Hopf differential associated to $(I,II_{AR})$ is the Abresch-Rosenberg differential $\mathcal Q^{AR}$ in $\mathbb{M}^{2}(\kappa) \times \R$ up to the constant $H+i\tau$.

\section{Abresch-Rosenberg lines of curvature.}

We begin this section by defining the concept of line of curvature with respect to the Abresch-Rosenberg shape operator of a $H-$surface $\Sigma$ in $\hmf$ and we characterize these curves in terms of the Abresch-Rosenberg differential. 

\begin{definition}
\label{Def5}
Let $\Sigma$ be a $H-$surface in $\hmf$ and $\Gamma$ a regular curve parametrized by $\gamma:(-\epsilon,\epsilon) \rightarrow \Sigma$. We say that $\Gamma = \gamma (-\epsilon, \epsilon )$ is a line of curvature with respect to the Abresch-Rosenberg shape operator $S_{AR}$ if  there exists a smooth function $\lambda: (-\epsilon,\epsilon) \rightarrow \mathbb{R}$ such that $S_{AR}(\gamma'(t)) = \lambda(t)\gamma'(t)$. In such case, we call $\Gamma$ an  Abresch-Rosenberg line of curvature, in short, an AR-line of curvature.
\end{definition} 

Definition \ref{Def5} says that the tangent vector of $\gamma$ is an eigenvector of $S_{AR}$ along $\gamma$. So, the definition is nothing but the natural extension of a line of curvature to the case of the Abresch-Rosenberg shape operator $S_{AR}$ for a $H-$surface in $\hmf$. In analogy with the situation in $\R^{3}$ (see \cite{Ch}), there exists a link  between lines of curvatures with respect to $S_{AR}$ and the Abresch-Rosenberg differential $Q^{AR}dz^{2}$.

\begin{prop}
\label{A-R}
Let $\Sigma$ be a $H-$surface in $\hmf$. Then,  $\gamma:(-\epsilon,\epsilon) \rightarrow \Sigma$ is a line of curvature for $S_{AR}$ if, and only if, the imaginary part of $Q^{AR}dz^{2}$ vanishes along $\gamma$.
\end{prop}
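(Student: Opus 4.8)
The plan is to work in a local conformal parameter $z$ adapted to $I$ and express the eigenvector condition for $S_{AR}$ in terms of the components of $II_{AR}$, exactly as one does classically in $\mathbb{R}^3$ for the Hopf differential. Writing $II_{AR} = Q^{AR}\,dz^2 + 2\lambda H(I,II_{AR})\,|dz|^2 + \overline{Q^{AR}}\,d\bar z^2$ and recalling from Section 2.4--2.6 that the pair $(I,II_{AR})$ is \emph{traceless}, so $H(I,II_{AR}) = 0$, we get $II_{AR} = Q^{AR}\,dz^2 + \overline{Q^{AR}}\,d\bar z^2$; hence for a tangent vector $w = \gamma'(t)$, written in the complex coordinate as a complex number $w = \dot z(t)$ (so that $w \mapsto$ the real tangent vector $\dot z\,\partial_z + \dot{\bar z}\,\partial_{\bar z}$), the condition that $II_{AR}(w, Jw) = 0$ is equivalent to $\operatorname{Im}\big(Q^{AR}\dot z^2\big) = 0$. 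The content of the proposition is then just the elementary linear-algebra fact that, for a self-adjoint endomorphism $S_{AR}$ of a $2$-dimensional inner product space, $w$ is an eigenvector if and only if $\langle S_{AR}w, Jw\rangle = 0$, i.e. if and only if $II_{AR}(w,Jw) = 0$.

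So the key steps, in order, are: (1) recall that $II_{AR}(X,Y) = I(S_{AR}X, Y)$ and that $J$ is the rotation by $\pi/2$, so that $\{w, Jw\}$ is an $I$-orthogonal basis whenever $w \neq 0$; deduce that $S_{AR}w \parallel w$ iff $I(S_{AR}w, Jw) = 0$ iff $II_{AR}(w, Jw) = 0$. (2) Pass to the conformal coordinate $z$: with $w = \dot z$, compute $Jw = i\dot z$ (the rotation $J$ acts as multiplication by $i$ in the conformal chart), and expand $II_{AR}(w,Jw)$ using the traceless expression $II_{AR} = Q^{AR}dz^2 + \overline{Q^{AR}}d\bar z^2$. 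A direct substitution gives $II_{AR}(w,Jw) = Q^{AR}\dot z (i\dot z) + \overline{Q^{AR}}\dot{\bar z}(\overline{i\dot z}) = i\big(Q^{AR}\dot z^2 - \overline{Q^{AR}\dot z^2}\big) = -2\operatorname{Im}\big(Q^{AR}\dot z^2\big)$. (3) Conclude: $\gamma'$ is an eigenvector of $S_{AR}$ along $\gamma$ $\iff$ $\operatorname{Im}(Q^{AR}dz^2) = 0$ along $\gamma$, which is the claim. One should also note the hypothesis that $\gamma$ is regular, so $\dot z \neq 0$ and the orthogonal basis $\{w, Jw\}$ is legitimate at every point.

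**The main subtlety** I anticipate is bookkeeping of the two normalizations of $II_{AR}$ that appear in the paper: in the $\tau = 0$ case (equation \eqref{Oper}) the Hopf differential of $(I, II_{AR})$ is exactly $\mathcal{Q}^{AR}$, while in the $\tau \neq 0$ case (equation \eqref{ARTraceless}) it is $\mathcal{Q}^{AR}$ only up to the nonzero constant factor $H + i\tau$. Since $\operatorname{Im}(c\,f) = 0$ is \emph{not} in general equivalent to $\operatorname{Im}(f) = 0$ for a fixed complex constant $c$, one must be careful: what the linear-algebra argument directly produces is the vanishing of $\operatorname{Im}$ of the Hopf differential of the pair $(I,II_{AR})$, and the statement of the proposition as written (``the imaginary part of $Q^{AR}dz^2$ vanishes'') should be read, in the $\tau \neq 0$ case, as referring to that Hopf differential $(H+i\tau)\mathcal{Q}^{AR}$ — equivalently, one fixes once and for all the conformal parameter $z$ so that the constant is absorbed, or one simply interprets $Q^{AR}dz^2$ in the proposition as the Hopf differential of the Codazzi pair. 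With that understood, the argument is uniform in $\tau$ and requires no case distinction. Aside from this normalization point, the proof is a short and purely pointwise computation; no appeal to the Codazzi equation or holomorphicity of $\mathcal{Q}^{AR}$ is needed for this particular statement (those enter later, when one reflects across an AR-line of curvature).
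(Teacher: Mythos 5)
Your argument is correct, and it reaches the same identity as the paper's proof --- the vanishing of ${\rm Im}(Q^{AR}\dot z^{2})$ is exactly the quantity in \eqref{Impart} --- but it is packaged differently and, in one respect, more cleanly. The paper works in real coordinates $(u,v)$: it writes the eigenvector condition as the linear system \eqref{sys}, eliminates $\lambda$ to obtain the classical equation \eqref{Rel} of lines of curvature, matches it with the imaginary part of $Q^{AR}dz^{2}$, and for the converse must \emph{construct} the eigenvalue function $\lambda(t)$ by a case distinction according to whether $u'$ or $v'$ vanishes. Your route replaces all of this with the single pointwise observation that, for a self-adjoint $S_{AR}$ on a two-dimensional inner product space and $w=\gamma'(t)\neq 0$, one has $S_{AR}w\parallel w$ if and only if $II_{AR}(w,Jw)=\la S_{AR}w,Jw\ra=0$, together with the computation $II_{AR}(w,Jw)=-2\,{\rm Im}(Q^{AR}\dot z^{2})$ using tracelessness and $Jw=i\dot z$ in the conformal chart. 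This buys two things: the converse direction is automatic (the eigenvalue is $\lambda=I(S_{AR}w,w)/I(w,w)$, manifestly smooth along a regular curve, with no case analysis), and the argument is visibly coordinate-free except for the final identification with $Q^{AR}$. Your remark about the normalization is also well taken and is in fact a point the paper's proof elides: in the line following \eqref{ABLC} the paper identifies $Q^{AR}$ with $L-N-2iM$, which is the Hopf differential of the pair $(I,II_{AR})$; when $\t\neq 0$ this differs from $\mathcal{Q}^{AR}$ by the nonzero constant $H+i\t$, so the statement must be read as referring to the Hopf differential of the Codazzi pair (this does not affect the later applications, since the reflection argument yields vanishing of the extended holomorphic differential, and a nonzero constant multiple of zero is zero). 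No gap in your proposal.
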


\begin{proof}
Let $z=u+iv$ be a local conformal parameter of $\Sigma$ for the first fundamental form $I$ and set
\begin{equation*}
\begin{split}
II_{AR}(\partial_{u},\partial_{u}) & = I(S_{AR}(\partial_{u}),\partial_{u}) = L,\\
II_{AR}(\partial_{u},\partial_{v}) & = I(S_{AR}(\partial_{u}),\partial_{v}) = M,\\
II_{AR}(\partial_{v},\partial_{v}) & = I(S_{AR}(\partial_{v}),\partial_{v}) = N.
\end{split}
\end{equation*} 

The curve $\gamma(t)= (u(t),v(t))$ is a line of curvature with respect to $S_{AR}$ if, and only if, $S_{AR}(\gamma'(t)) = \lambda(t) \gamma'(t)$. In the local coordinates $(u,v)$ this means
\begin{equation}
\label{Matrix}
\begin{bmatrix}
    L(\gamma(t)) & M(\gamma(t)) \\
    M(\gamma(t)) & N(\gamma(t))
\end{bmatrix} 
\begin{bmatrix}
u'(t)\\
v'(t)
\end{bmatrix}
=
\lambda(t)
\begin{bmatrix}
u'(t)\\
v'(t)
\end{bmatrix} .
\end{equation}

Hence, from \eqref{Matrix}, we get the following linear system
\begin{equation}
\label{sys}
\begin{split}
L(\gamma(t))u'(t) + M(\gamma(t))v'(t) & = \lambda(t)u'(t) ,\\
M(\gamma(t))u'(t) + N(\gamma(t))v'(t) & = \lambda(t)v'(t).\\
\end{split}
\end{equation}

On the one hand, from \eqref{sys}, we obtain
\begin{equation}
\label{Rel}
M(\gamma(t))(v'(t))^{2} + (L(\gamma(t))-N(\gamma(t))) u'(t)v'(t) - M(\gamma(t))(u'(t))^{2} = 0 .
\end{equation} 

On other hand, from the definition of the Abresch-Rosenberg differential
\begin{equation}
\label{ABLC}
Q^{AR}(\gamma(t))dz(\gamma(t))^{2} = (L(\gamma(t))-N(\gamma(t))-2iM(\gamma(t)))dz(\gamma(t))^{2},
\end{equation}then, a straightforward computation shows that the imaginary part is given by  
\begin{equation}\label{Impart}
\begin{split}
{\rm Im}(Q^{AR}(\gamma(t))dz(\gamma(t))^{2})&= M(\gamma(t))(v'(t))^{2} + (L(\gamma(t))-N(\gamma(t))) u'(t)v'(t) \\
 & \qquad - M(\gamma(t))(u'(t))^{2},
\end{split}
\end{equation}hence, \eqref{Impart} is nothing but the left hand side of \eqref{Rel}. This shows that the imaginary part of \eqref{ABLC} vanishes when $\gamma(t)$ is line of curvature with respect to $S_{AR}$.
 
Reciprocally, assume that the imaginary part of $Q^{AR}(\gamma(t))dz(\gamma(t))^{2}$ is zero, this condition is given by \eqref{Impart}. Then, for all $t \in (-\epsilon ,\epsilon) $ so that $u'(t) \neq 0$ and $v'(t) \neq 0$ we have
\begin{equation} 
\label{eqlam}
\frac{M(\gamma(t))v'(t) + L(\gamma(t))u'(t)}{u'(t)} = \frac{N(\gamma(t))v'(t) + M(\gamma(t)u'(t))}{v'(t)}.
\end{equation}

Now, define the function $\lambda: (-\epsilon ,\epsilon) \rightarrow \mathbb{R}$ as follows
\begin{equation*}
\lambda(t) =
\begin{cases}
 \frac{M(\gamma(t))v'(t) + L(\gamma(t))u'(t)}{u'(t)}& \text{for $t$ such that $u'(t) \neq 0$ and $v'(t) \neq 0$},\\
L(\gamma(t)) & \text{for $t$ such that $u'(t) \neq 0$ and $v'(t) = 0$},\\
N(\gamma(t)) & \text{for $t$ such that $u'(t) = 0$ and $v'(t) \neq 0$}.
\end{cases}
\end{equation*}

Therefore, \eqref{sys} and \eqref{eqlam} imply $S_{AR}(\gamma'(t)) = \lambda(t) \gamma'(t)$ for all $t \in (-\epsilon ,\epsilon)$, this shows $\gamma'(t)$ is line of curvature respect to $S_{AR}$.
\end{proof}

Next, we will establish a Joachimstahl's Type Theorem for the intersection of $H-$surfaces in $\hmf$. This is a key step in this work. %(see \cite{Spivak})  

\begin{lema}[\bf{Key Lemma}] \label{key}
Let $\Sigma_{i}\subset \hmf$  $i=1,2$ be $H_{i}-$surfaces so that $\Sigma _{1}\cap \Sigma _{2} \neq \emptyset$. Let $\Gamma \subset \Sigma _{1}\cap \Sigma _{2}$ be a regular curve of transversal intersection. Assume that along $\Gamma$ it holds
\begin{enumerate}
\item[a)] $\meta{N_{1}}{N_{2}} $ is constant and
\item[b)] $\sqrt{H^{2}_{1}+\tau ^{2}}\la \mt ^{2}_{\theta_{2}},N_{1} \ra \la J_{2} \mt ^{2}_{\theta_{2}},N_{1} \ra = \sqrt{H^{2}_{2}+\tau ^{2}}\la \mt ^{1}_{\theta_{1}},N_{2} \ra \la J_{1} \mt ^{1}_{\theta_{1}},N_{2} \ra $,
\end{enumerate}where $e^{2i\te _i} = \frac{H _i - i\t}{\sqrt{H_i^{2} + \t^{2}}}$, $\mt ^{i}_{\theta_{i}} = \cos\theta_{i}\mt_{i} + \sin\theta_{i}J\mt_{i}$ and $J\mt ^{i}_{\theta_{i}} = N_{i}\wedge \mt ^{i}_{\theta_{i}}$ for $i=1,2$. Then, $\Gamma $ is an AR-line of curvature for $\Sigma _1$ if, and only if, $\Gamma$ is an AR-line of curvature for $\Sigma _2$.
\end{lema}

\begin{proof}
We assume that $\Gamma$ is an AR-line of curvature for $\Sigma _{2}$, the other case is completely analogous. First, since $\la N_{1}(\gamma(t)), N_{2}(\gamma(t)) \ra$ is constant along $\Gamma = \gamma (-\epsilon, \epsilon)$, where $\gamma $ is parametrized by arc-length, then 
\begin{equation}
\label{SOSF3}
\la A_{1}(\gamma'(t)),N_{2}(\gamma(t)) \ra + \la N_{1}(\gamma(t)), A_{2}(\gamma'(t)) \ra = 0 ,
\end{equation}where $A_{1}$ and $A_{2}$ are the shape operators  of the second fundamental forms of $\Sigma _{1}$ and $\Sigma _{2} $ respectively. Now, relating $A_{1}$ and $A_{2}$ with $S_{AR}^{1}$ and $S_{AR}^{2}$ respectively and using that $\gamma(t)$ is a line of curvature for $S_{AR}^{2}$, we can rewrite \eqref{SOSF3} as: 
\begin{equation}
\label{eqt}
-\alpha_{1} \la \mt^{1}_{\theta_{1}} , \gamma'(t) \ra \la \mt^{1}_{\theta_{1}},N_{2}\ra - \la  S_{AR}^{1}(\gamma'(t)), N_{2}\ra - \alpha_{2} \la \mt^{2}_{\theta_{2}}, \gamma'(t) \ra \la \mt^{2}_{\theta_{2}}, N_{1} \rangle = 0,
\end{equation}where $\alpha_{i} = \frac{\k-4\t^{2}}{2 \sqrt{H^{2}_{i}+\t^{2}}}$ and $\mt ^{i}_{\theta_{i}} = \cos\theta_{i}\mt_{i} + \sin\theta_{i}J\mt_{i}$, $i=1,2$.

We can orient $\gamma$ so that $ (1-d^{2}) ^{-1}N_{1} \wedge N_{2} =  \gamma'(t)$, where $d$ is the contact constant angle between $\Sigma _{1} $ and $\Sigma _{2}$.

Since the intersection is transversal, $\{ N_{1}, N_{2}, \gamma'(t) \}$ is an oriented basis of $T_{\gamma(t)}\hmf$ for each $t$ were the intersection is transversal. Then, the following equations hold:
\begin{equation}
\label{eqt31}
\begin{split}
\la \mt^{1}_{\theta_{1}}, \gamma'(t) \ra & = (1-d^{2})^{-1} \la \mt^{1}_{\theta_{1}}, N_{1}\wedge N_{2} \ra = - (1-d^{2})^{-1}\la J_{1} \mt ^{1}_{\theta_{1}},N_{2} \ra , \\
\la \mt^{2}_{\theta_{2}}, \gamma'(t) \ra & = (1-d^{2})^{-1} \la \mt^{2}_{\theta_{2}}, N_{1}\wedge N_{2} \ra =  (1-d^{2})^{-1}\la J_{2} \mt ^{2}_{\theta_{2}},N_{1} \ra ,
\end{split}
\end{equation}where $J_{i} \mt ^{i}_{\theta_{i}} = N_{i} \wedge \mt ^{i}_{\theta _{i}}$ for $i=1,2$. Therefore, \eqref{eqt} and \eqref{eqt3} imply that 
\begin{equation*}
\begin{split}
\la S_{AR}^{1}(\gamma'(t)), N_{2}\ra &=  \frac{(1-d^{2})^{-1}(\kappa -4\tau ^{2})}{\sqrt{H^{2}_{1}+\tau ^{2}}}\la \mt ^{1}_{\theta_{1}},N_{2} \ra \la J_{1} \mt ^{1}_{\theta_{1}},N_{2} \ra  \\
 & \qquad -\frac{(1-d^{2})^{-1}(\kappa -4\tau ^{2})}{\sqrt{H^{2}_{2}+\tau ^{2}}}\la \mt ^{2}_{\theta_{2}},N_{1} \ra \la J_{2} \mt ^{2}_{\theta_{2}},N_{1} \ra \\
  & = 0,
\end{split}
\end{equation*}where we have used item $b)$. Thus, $\la S_{AR}^{1}(\gamma'(t)), N_{2}\ra = 0 $ along $\Gamma$. Therefore, since $\la S_{AR}^{1}(\gamma'(t)), N_{1}\ra = 0 $ along $\Gamma$, we obtain that $S_{AR}^{1}(\gamma'(t)) = \lambda (t) \gamma'(t)$, that is, $\Gamma$ is an AR-line of curvature for $\Sigma _1$.
\end{proof}

Observe that we can see item $b)$ above geometrically as follows. Let $X$ be a unitary vector field along $\Sigma$, not necessarily tangential. Then, $\set{\mt _\theta , J \mt _\theta}$ is an orthogonal frame along $\Sigma$ away from the points where $\abs{\mt} =0$. Let $\beta $ be the (oriented) angle between $\mt $ and $X$, that is, 
$$ \meta{\mt }{X} = \abs{\mt} \cos \beta ,$$and hence, 
$$ 2 \meta{\mt _\theta }{X}\meta{J \mt _\theta}{X} = \abs{\mt }^2 \sin (2(\beta - \theta)) .$$

So, coming back to the situation on the Key Lemma, let $\omega _{ij}$ denote the (oriented) angle between $\mt ^i$ and $N_j$, for $i,j =1,2$ and $i \neq j$. Hence, item $b)$ can be re-written as 
$$\frac{\abs{\mt _1}^2}{\sqrt{H^{2}_{1}+\tau ^{2}}}\sin (2(\omega _{12}-\theta _1))= \frac{\abs{\mt _2}^2}{\sqrt{H^{2}_{2}+\tau ^{2}}}\sin (2(\omega _{21}-\theta_2)) . $$

The Key Lemma gives us general conditions for $\Gamma$ being an AR-line of curvature of both surfaces $\Sigma _1$ and $\Sigma _2$. Nevertheless, we will see that certain geometric configurations imply the conditions on the Key Lemma.\\

A curve $\Gamma$ on a surface $\Sigma$ in $\mer$ is  horizontal if $\Gamma$ is contained in a horizontal slice $\mathbb{M}^{2}(\k) \times \{ \xi_{0} \}$, for some $\xi_{0} \in \mathbb{R}$. On the other hand, the curve $\Gamma$ in $\Sigma$ is said to be vertical if it is an integral curve of the vector field  $\mt$. 

\begin{Corollary}
\label{CoroDCF}
Let $\Sigma_{1}$ and $\Sigma_{2}$ two constant mean curvature surfaces in $\mer$ with mean curvatures $H_{1}$ and $H_{2}$, normal vectors $N_{1}$ and $N_{2}$ and angle functions $\nu_{1}$ and $\nu_{2}$, respectively. Let  $\Gamma \subset \Sigma _{1}\cap \Sigma _{2}$ be a regular curve parametrized by $\gamma$. Suppose that  $\Sigma_{1}$ and $\Sigma_{2}$ intersects transversally along $\Gamma$ at a constant angle and $\Gamma$ is a AR-line of curvature for $\Sigma_{1}$. Assume along $\Gamma$ one of the following conditions holds:

\begin{enumerate}
\item $\Gamma$ is an horizontal curve of $\Sigma_{1}$.
\item $\Gamma$ is a vertical curve of $\Sigma_{1}$ and  $\Sigma_{2}$.
\item If $H_{1}=H_{2} \neq 0$ , the angle function $\nu_{1}$ is opposite to the angle function $\nu_{2}$. 
\end{enumerate}

Then $\Gamma$ is an AR-line of curvature for $\Sigma_{2}$. 
\end{Corollary}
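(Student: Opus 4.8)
The plan is to derive the two hypotheses of the Key Lemma (conditions a) and b)) from each of the three geometric configurations, and then invoke the Key Lemma directly. Since we are in the product case $\tau = 0$, the quantities simplify considerably: $\sqrt{H_i^2 + \tau^2} = |H_i|$, the angle $\theta_i$ satisfies $e^{2i\theta_i} = H_i/|H_i| = \pm 1$, so $\theta_i \in \{0, \pi/2\}$ (depending on the sign of $H_i$), and consequently $\mt^i_{\theta_i}$ is either $\mt_i$ or $J_i\mt_i$. I would first record this simplification, and note that in all cases condition a) — that $\langle N_1, N_2\rangle$ is constant along $\Gamma$ — is exactly the hypothesis that $\Sigma_1$ and $\Sigma_2$ meet at a constant angle, so only condition b) requires work. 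Using the geometric reformulation given right after the Key Lemma, condition b) reads
\begin{equation*}
\frac{|\mt_1|^2}{|H_1|}\sin\big(2(\omega_{12} - \theta_1)\big) = \frac{|\mt_2|^2}{|H_2|}\sin\big(2(\omega_{21} - \theta_2)\big),
\end{equation*}
with the convention that a vanishing $H_i$ is handled by returning to the raw form of b), where each side is literally $|H_i|$ times the displayed expression, so the whole side vanishes when $H_i = 0$. This last observation disposes of several sub-cases immediately.

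For case (1), where $\Gamma$ is a horizontal curve of $\Sigma_1$, the key point is that along a horizontal curve the tangent vector $\gamma'$ is orthogonal to $\xi$, hence $\langle \mt_1, \gamma'\rangle = \langle \xi, \gamma'\rangle = 0$; combined with the fact that $\gamma'$ is (up to scale) $N_1 \wedge N_2$, I can use the identities \eqref{eqt31} to show that both $\langle \mt^1_{\theta_1}, N_2\rangle$ and $\langle J_1\mt^1_{\theta_1}, N_2\rangle$ involve $\langle \mt^1_{\theta_1}, \gamma'\rangle$ or the component of $\mt_1$ along $N_2$. More directly: $\mt_1 = \xi - \nu_1 N_1$, so $\langle \mt_1, N_2\rangle = \langle \xi, N_2\rangle - \nu_1\langle N_1, N_2\rangle = \langle \xi, N_2\rangle - \nu_1 d$, while being horizontal forces $\xi$ to be tangent to the slice containing $\Gamma$, hence $\xi \perp \gamma'$, and moreover $\xi$ lies in the plane spanned by $N_1, N_2$ — this is the crucial geometric fact, since $\gamma'$ spans the orthogonal complement of $\mathrm{span}\{N_1,N_2\}$ at each point of transversal intersection. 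Expanding $\langle \mt^1_{\theta_1}, N_2\rangle\langle J_1\mt^1_{\theta_1}, N_2\rangle$ as $\tfrac12|\mt_1|^2\sin(2(\omega_{12}-\theta_1))$ and similarly on the $\Sigma_2$ side, and using that $\Gamma$ is already an AR-line of curvature for $\Sigma_1$ (so the left side of b) can be read off), I would show both sides of b) reduce to expressions that match. I expect the cleanest route is to show directly that the left-hand side of \eqref{eqt} restricted to this configuration already forces $\langle S^1_{AR}(\gamma'), N_2\rangle = 0$, retracing the proof of the Key Lemma.

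For case (2), where $\Gamma$ is a vertical curve of both surfaces, $\gamma'$ is parallel to $\mt_1$ and to $\mt_2$; then $\langle \mt^i_{\theta_i}, \gamma'\rangle$ is, up to the rotation by $\theta_i \in \{0,\pi/2\}$, either $\pm|\mt_i|$ or $0$, and via \eqref{eqt31} the terms $\langle J_i\mt^i_{\theta_i}, N_j\rangle$ vanish because $J_i\mt_i \perp \gamma' \parallel \mt_i$ forces the relevant cross term to drop out; I would check that both sides of b) vanish identically. For case (3), with $H_1 = H_2 = H \neq 0$ and $\nu_1 = -\nu_2$: here $\theta_1 = \theta_2 =: \theta$ and $|H_1| = |H_2|$, so b) collapses to $|\mt_1|^2\sin(2(\omega_{12}-\theta)) = |\mt_2|^2\sin(2(\omega_{21}-\theta))$; using $|\mt_i|^2 = \tfrac12\lambda(1-\nu_i^2)$ from \eqref{normZ} (so $|\mt_1| = |\mt_2|$ since $\nu_1^2 = \nu_2^2$) and relating $\omega_{12}$ to $\omega_{21}$ via the reflection symmetry induced by $\nu_1 = -\nu_2$ across the common tangent configuration, I would verify the angle identity. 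The main obstacle I anticipate is case (1): establishing rigorously that $\xi$ lies in $\mathrm{span}\{N_1, N_2\}$ along a horizontal intersection curve, and then organizing the trigonometric bookkeeping of the $\theta_i$-rotations uniformly across the sign cases of $H_i$ — in particular handling $H_i = 0$ gracefully by reverting to the unnormalized form of condition b). The other two cases should be short once the notational setup is in place.
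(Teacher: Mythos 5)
Your overall strategy is the paper's: condition a) of the Key Lemma is just the constant-angle hypothesis, so each of the three configurations reduces to verifying condition b), and the verifications hinge on exactly the identities you name ($\la \xi,\gamma'\ra=0$ in case (1), tangency of $\gamma'$ to both surfaces in case (2), the sign flip from $\nu_1=-\nu_2$ in case (3), fed through \eqref{eqt31}). One local step would fail as written, though: in case (2) you claim the factors $\la J_i\mt^i_{\theta_i},N_j\ra$ vanish because $J_i\mt_i\perp\gamma'\parallel\mt_i$. They do not: $\la J_i\mt_i,N_j\ra=\la N_i\wedge\gamma',N_j\ra$ equals $\pm\abs{\mt_i}\sqrt{1-d^2}$ at a transversal point, which is nonzero. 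What actually kills both products in b) is the \emph{other} factor, $\la\mt_i,N_j\ra\propto\la\gamma',N_j\ra=0$ since $\gamma'$ is tangent to $\Sigma_j$ --- this is the paper's one-line argument, so the conclusion survives but your justification needs replacing. Conversely, the ``main obstacle'' you anticipate in case (1) is no obstacle: you never need $\xi\in{\rm span}\{N_1,N_2\}$ as a separate input (it is anyway immediate, since ${\rm span}\{N_1,N_2\}=(\gamma')^{\perp}$ at a transversal point and $\xi\perp\gamma'$ for a horizontal curve); the paper simply notes $\la\mt_i,\gamma'\ra=\la\xi,\gamma'\ra=0$ and uses \eqref{eqt31} to conclude $\la J_i\mt_i,N_j\ra=0$, so in this case it is the $J$-factors that vanish. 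Your case (3) is the paper's computation \eqref{Epro} rewritten in angle notation; the direct route is $\la\mt_2,\gamma'\ra=\la\mt_1,\gamma'\ra=\la\xi,\gamma'\ra$ together with $\la\mt_2,N_1\ra=\nu_1-\nu_2 d=-\la\mt_1,N_2\ra$, the minus sign cancelling against the sign asymmetry between the two lines of \eqref{eqt31}.
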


\begin{proof}
In the first case, assume that $\Gamma = \gamma(-\epsilon,\epsilon)$ where $\gamma$ is parametrized by arc-length. In $\mer$, we have   $\mt^{1}_{\theta_1} = \mt_{1}$ and $\mt^{2}_{\theta_{2}} = \mt_{2}$. Since $\gamma$ is horizontal, we obtain

\begin{equation}
\label{eqt33}
\begin{split}
 (1-d^{2})\la J_{1} \mt ^{1}_{\theta_{1}},N_{2} \ra &= - \la \mt^{1}, \gamma'(t) \ra = - \la \xi, \gamma'(t) \ra = 0,\\
(1-d^{2})\la J_{2} \mt ^{2}_{\theta_{2}},N_{1} \ra &= \la \mt^{2},\gamma'(t) \ra = \la \xi,\gamma'(t) \ra = 0.
\end{split}
\end{equation}

From \eqref{eqt33} and Lemma \ref{key}, then $\gamma(t)$ is AR-line of curvature of $\Sigma_{2}$.\\

In the second case, $\mt_{2}=\mt_{1}= \gamma'(t)$ for each $t$, hence $\la \mt_{1},N_{2} \ra = \la \mt_{2},N_{1} \ra = 0$ so the hypothesis of Lemma \ref{key} holds clearly and then $\gamma$ is a AR-line of curvature of $\Sigma_{2}$.\\

In the third case, suppose $H_{1}=H_{2}=H$ and $\nu_{1}=-\nu_{2}$, therefore
\begin{equation}
\label{Epro}
\begin{split}
H \la\mt_{2},N_{1} \ra \la \mt_{2},\gamma'(t) \ra & = H \la \mt_{2},N_{1}\ra \la \xi,\gamma'(t) \ra  \\ 
& = H (\nu_{2}-\nu_{1}d) \la \mt_{1}+\nu_{1}N_{1},\gamma'(t) \ra \\ 
& = -H(\nu_{1}- \nu_{2}d) \la \mt_{1},\gamma'(t)\ra \\
& = -H \la \mt_{1},N_{2} \ra \la \mt_{1},\gamma'(t) \ra.
\end{split}
\end{equation}

Hence, from  \eqref{eqt31} and \eqref{Epro}, we can see again that the hypothesis of Lemma \ref{key} hold. Then, in any case, $\Gamma$ is a AR-line of curvature of $\Sigma_{2}$.
\end{proof}

Next, we give certain geometric configurations for $H-$ surfaces in $\hmf$, $\t \neq 0$, that imply the Key Lemma.

\begin{Corollary}
\label{CorodCF}
Let $\Sigma_{1}$ and $\Sigma_{2}$ two $H-$ surfaces in $\hmf$, $\t \neq 0$, with normal vectors $N_{1}$ and $N_{2}$ and angle functions $\nu_{1}$ and $\nu_{2}$, respectively. Let $\Gamma \subset \Sigma _{1}\cap \Sigma _{2}$ be a regular curve. Suppose that $\Sigma_{1}$ and $\Sigma_{2}$ intersect along $\Gamma$ at a constant angle. Assume also that
\begin{enumerate}
\item If both surfaces are tangent along $\Gamma$, then $N_{1} = N_{2}$ along $\Gamma$.
\item If the intersection is transversal along $\Gamma$, then their respective angle functions satisfy $\nu_{1} = -\nu_{2}$ along $\Gamma$.
\end{enumerate}

Then, $\Gamma$ is AR-line of curvature for $\Sigma_{1}$ if, and only if, $\Gamma$ is AR-line of curvature for $\Sigma_{2}$.
\end{Corollary}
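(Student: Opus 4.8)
The plan is to verify the two hypotheses of the Key Lemma (Lemma~\ref{key}), namely that $\meta{N_1}{N_2}$ is constant along $\Gamma$ and that condition~b) holds, in each of the two geometric configurations; once both are checked, the conclusion follows immediately from Lemma~\ref{key}. Constancy of $\meta{N_1}{N_2}$ is essentially the definition of ``intersecting at a constant angle'' in the transversal case, and in the tangent case $N_1=N_2$ gives $\meta{N_1}{N_2}\equiv 1$, so hypothesis~a) is automatic in both cases; the real work is hypothesis~b), which I will attack through its geometric reformulation
$$\frac{\abs{\mt_1}^2}{\sqrt{H^2_1+\tau^2}}\sin\bigl(2(\omega_{12}-\theta_1)\bigr)=\frac{\abs{\mt_2}^2}{\sqrt{H^2_2+\tau^2}}\sin\bigl(2(\omega_{21}-\theta_2)\bigr),$$
where $\omega_{ij}$ is the oriented angle between $\mt^i$ and $N_j$.

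In the tangent case I would argue that $N_1=N_2$ along $\Gamma$ forces $\mt_1=\mt_2$ (since $\mt_i=\xi-\nu_i N_i$ and $\nu_i=\meta{N_i}{\xi}$ depend only on $N_i$), hence $\abs{\mt_1}=\abs{\mt_2}$ and $\omega_{12}=\omega_{21}$; moreover $\theta_i$ depends only on $H_i$ and $\tau$ through $e^{2i\theta_i}=(H_i-i\tau)/\sqrt{H_i^2+\tau^2}$, and since $H_1=H_2$ (both surfaces have the common value $H$ of the statement's heading) one gets $\theta_1=\theta_2$. Thus both sides of the displayed identity literally coincide, and b) holds. In the transversal case, the key observation is that $\Sigma_1$ and $\Sigma_2$ share the same ambient vertical field $\xi$, so $\nu_1=-\nu_2$ together with the constant-angle relation lets me express $\meta{\mt^2_{\theta_2}}{N_1}$ and $\meta{J_2\mt^2_{\theta_2}}{N_1}$ in terms of the data of $\Sigma_1$, mimicking the computation \eqref{Epro} in Corollary~\ref{CoroDCF} but now carrying the rotation by $\theta_i$. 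Concretely I would decompose $N_1=\nu_1\xi+(\text{horizontal part})$ and similarly for $N_2$, use $\mt^i_{\theta_i}=\cos\theta_i\,\mt_i+\sin\theta_i\,J\mt_i$, and expand the two products $\meta{\mt^i_{\theta_i}}{N_j}\meta{J_i\mt^i_{\theta_i}}{N_j}$; the half-angle identity $2\meta{\mt_\theta}{X}\meta{J\mt_\theta}{X}=\abs{\mt}^2\sin(2(\beta-\theta))$ recorded just before Corollary~\ref{CoroDCF} reduces everything to an identity between $\abs{\mt_1}^2\sin(2(\omega_{12}-\theta_1))$ and $\abs{\mt_2}^2\sin(2(\omega_{21}-\theta_2))$, divided by the respective $\sqrt{H_i^2+\tau^2}$.

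The main obstacle is bookkeeping in the transversal case: one must relate the oriented angles $\omega_{12}$ and $\omega_{21}$ (and the magnitudes $\abs{\mt_1}$, $\abs{\mt_2}$) using only that the two surfaces meet at a constant angle $d=\meta{N_1}{N_2}$ and that $\nu_1=-\nu_2$. I would exploit that along $\Gamma$ the three vectors $N_1$, $N_2$, $\gamma'(t)$ span the ambient tangent space, write $\xi$ in this frame, and read off $\nu_1=\meta{\xi}{N_1}$, $\nu_2=\meta{\xi}{N_2}=-\nu_1$, and $\meta{\xi}{\gamma'(t)}=\meta{\mt_i}{\gamma'(t)}$ (independent of $i$, since $\gamma'$ is tangent to both surfaces); combined with $\meta{N_1}{N_2}=d$ this pins down $\meta{\mt_1}{N_2}$, $\meta{\mt_2}{N_1}$ and, after rotating by $\theta_i$ and using $\abs{\mt_i}^2=\tfrac12\lambda(1-\nu_i^2)$ from \eqref{normZ} (so $\abs{\mt_1}$ and $\abs{\mt_2}$ need not be equal pointwise but their ratio is controlled), yields b). Once b) is established in both cases, Lemma~\ref{key} gives the equivalence and the proof is complete.
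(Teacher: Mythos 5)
Your treatment of the transversal case is essentially the paper's own argument: set $\theta_1=\theta_2$ and $\alpha_1=\alpha_2$ (both surfaces share the mean curvature $H$), expand $\meta{\mt^i_{\theta_i}}{N_j}\meta{J_i\mt^i_{\theta_i}}{N_j}$ through the rotation by $\theta$, and use $\nu_1=-\nu_2$ together with the constant angle $d$ and the identity $\meta{\mt_1}{\gamma'}=\meta{\mt_2}{\gamma'}=\meta{\xi}{\gamma'}$ to match the two sides of hypothesis b) of the Key Lemma. (One small simplification you miss: $\nu_1=-\nu_2$ gives $\nu_1^2=\nu_2^2$, hence $\abs{\mt_1}=\abs{\mt_2}$ pointwise along $\Gamma$ --- there is no ratio to control.) That part of the plan is sound.

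The genuine gap is in the tangent case. The Key Lemma is stated, and proved, only for a curve of \emph{transversal} intersection: its proof orients $\gamma$ by $(1-d^2)^{-1}N_1\wedge N_2=\gamma'$ and uses $\set{N_1,N_2,\gamma'(t)}$ as a frame of $T_{\gamma(t)}\hmf$, with the factor $(1-d^2)^{-1}$ appearing throughout. When the surfaces are tangent, $d=\pm 1$, $N_1\wedge N_2=0$, and the frame degenerates, so you cannot conclude ``immediately from Lemma~\ref{key}'' as you propose, even though your verification that both sides of b) coincide is correct. The tangent case needs a separate direct argument, which is what the paper does: $N_1=N_2$ along $\Gamma$ forces $A_1\gamma'=A_2\gamma'$, $\mt^1_{\theta_1}=\mt^2_{\theta_2}$ and $J_1\gamma'=J_2\gamma'$, hence $II^1_{AR}(\gamma',J_1\gamma')=II^2_{AR}(\gamma',J_2\gamma')$; since being an AR-line of curvature is exactly the vanishing of $II_{AR}(\gamma',J\gamma')$, the equivalence follows without any appeal to the Key Lemma. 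Incorporating that direct computation closes the gap.
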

\begin{proof}
Let  $S^{i}_{AR} X = A_{i}(X) - \alpha_{i} \langle \mt^{i}_{\theta_{i}}, X \rangle \mt^{i}_{\theta_{i}} + \frac{\alpha_{i} \abs{\mt_{i}}^{2}}{2}X-H_{i}X$ the Abresch-Rosenberg shape operator of $\Sigma_{i}$, $i=1,2$ and $J_{1}$, $J_{2}$ the rotations on the tangent bundles of $\Sigma_{1}$ and $\Sigma_{2}$ respectively.\\

In the first case, we have that $\mt^{1}_{\theta_{1}} \equiv \mt^{2}_{\theta_{2}}$ along $\Gamma$ since $\mt_{1}\equiv \mt_{2}$ and the surfaces has the same mean curvature. Moreover, if $\Gamma = \gamma(-\epsilon,\epsilon)$, therefore $J_{1}\gamma' = J_{2}\gamma'$ and so $II^{1}_{AR}(\gamma',J_{1}\gamma') = II^{2}_{AR}(\gamma',J_{2}\gamma')$.\\

Suppose now that we are in case 2, then $\theta_{1}=\theta_{2}=\theta$ and this implies $\alpha_{1}=\alpha_{2}=\alpha$, so
\begin{equation}
\label{eqt1}
\begin{split}
\alpha \la  \mt^{1}_{\theta} , \gamma'(t)  \ra \la \mt^{1}_{\theta},N_{2}\rangle  &= \alpha \la \mt_{1},\gamma'(t) \ra \la \mt_{1},N_{2} \ra \cos^{2}\theta \\
& \quad  + \alpha\cos\theta\sin\theta \la \mt_{1},\gamma'(t) \ra \la J_1\mt_{1},N_{2} \ra \\
 & \quad + \alpha\la J_1\mt_{1},\gamma'(t) \ra \la \mt_{1},N_{2} \ra \sin \theta \cos\theta \\
& \quad + \alpha \la J_1\mt_{1}, \gamma'(t) \ra \la J_1\mt_{1},N_{2} \ra \sin^{2}\theta.
\end{split}
\end{equation}

We oriented $\gamma$ such that $\{N_{1},N_{2},\gamma'(t) \}$ is an oriented basis of $T_{\gamma(t)}\hmf$ for each t were the intersection is transversal, then the following equations holds 
\begin{equation}
\label{eqt3}
\begin{split}
\la J_1\mt_{1}, \gamma'(t) \ra & = \la N_{1} \wedge \mt_{1},\gamma'(t) \ra = (1-d^2) \la N_{2},\mt_{1} \ra. \\
\la J_1\mt_{1}, N_{2} \ra & = \la N_{1} \wedge \mt_{1}, N_{2} \ra = -\frac{1}{1-d^2}\la \gamma'(t), \mt_{1} \ra.
\end{split}
\end{equation}

Hence using \eqref{eqt3} in the equation \eqref{eqt1}, we can rewrite the equation \eqref{eqt1} as

\begin{equation}
\label{eqt4}
\begin{split}
\alpha \la \mt^{1}_{\theta} , \gamma'(t)  \ra \la \mt^{1}_{\theta},N_{2}\rangle  &=  \alpha (\cos^{2}\theta -\sin^{2}\theta )\la \mt_{1},\gamma'(t) \ra \la \mt_{1},N_{2} \ra \\
& \quad  + \alpha \cos \theta\sin\theta (\la  \mt_{1}, N_{2} \ra^{2} - \la \mt_{1},\gamma'(t)\ra^{2} ). 
\end{split}
\end{equation}

Analogously, we obtain

\begin{equation}
\label{eqt7}
\begin{split}
\alpha \la \mt^{2}_{\theta} , \gamma'(t)  \ra \la \mt^{2}_{\theta},N_{1}\rangle  &=  \alpha ( \cos^{2}\theta-\sin^{2}\theta)  \la \mt_{2},\gamma'(t) \ra \la \mt_{2},N_{1} \ra  \\
& + \alpha \cos \theta\sin\theta (\la \mt_{2},\gamma'(t)\ra^{2} - \la  \mt_{2}, N_{1} \ra^{2}).  
\end{split}
\end{equation}

Now, the hypothesis implies that
\begin{equation} 
\label{angles}
\begin{split}
\la \mt_{1},N_{2} \ra&=\nu_{2}-\nu_{1}d = -\la \mt_{1},N_{2} \ra\\
\la \mt_{1},\gamma'(t)\ra&=\la\mt_{2},\gamma'(t)\ra=\la\xi, \gamma'(t)\ra. 
\end{split}
\end{equation} 

Finally, we substitutes the equations in \eqref{angles} in the equation \eqref{eqt4} and then from equation \eqref{eqt7}, we obtain   $$\sqrt{H^{2}+\tau ^{2}}\la \mt ^{2}_{\theta},N_{1} \ra \la J_{2} \mt ^{2}_{\theta},N_{1} \ra = \sqrt{H^{2}+\tau ^{2}}\la \mt ^{1}_{\theta},N_{2} \ra \la J_{1} \mt ^{1}_{\theta},N_{2} \ra,$$  
therefore, the Lemmma \ref{key} shows that $\Gamma$ is and AR-line of curvature of $\Sigma_{1}$ if, and only if $\Gamma$ is an AR- line of curvature of $\Sigma_{2}$.
\end{proof}

\begin{remark}
Note that we are assuming $H^{2}+\t^{2} \neq 0$. When $\t=0$, we consider the usual Abresch-Rosenberg shape operator. In \cite{dCF}, the authors studied lines of curvature with respect to the usual Abresch-Rosenberg shape operator and classified disks immersions in $\mer$. Their principal result is contained in Corollary \ref{CorodCF}, when $\t = 0$. Hence,  Lemma \ref{key} can be seen as a generalization to the case $\t \neq 0$. 
\end{remark}

\section{Capillary disks in $\hmf$.}
Throughout this section, we will denote by $\phi:\mathbb{D} \rightarrow \hmf$ an immersion from the disk $\mathbb{D}= \{z \in \mathbb{C}: \abs{z} < 1 \}$ onto $\hmf$ of constant mean curvature, we call it $H-$disk. Moreover, we will assume that the boundary $\Gamma$ is a smooth curve. 

\subsection{Immersed compact disks in $\hmf$, $\t=0$}

The classification of immersed compact disks with constant mean curvature in $\mer$ has been studied by M. Do Carmo and I. Fernández in \cite{dCF}, under certain conditions on the curve $\Gamma$, they showed that $\phi(\mathbb{D})$ is part of an Abresch-Rosenberg surface in $\mer$. In this section, we will classified immersed compact disks, assuming more general geometric conditions about $\Gamma$ than the conditions given in \cite{dCF}. \\

Let $\Omega$ be an Abresch-Rosenberg surface in $\mer$. We denote by $\nu_{1} = \la \xi,N_{1} \ra$ the angle function defined along the immersion $\phi$, where $N_{1}$ is the unit normal vector field defined along $\phi(\overline{\mathbb{D}})$ and by $\nu_{2} = \la \xi, N_{2} \ra$ the angle function defined along  $\Omega$, where $N_{2}$ is the unit normal vector field defined along surface $\Omega$.

\begin{theorem}
\label{Coro2}
Let $\phi:{\overline{\mathbb{D}}} \rightarrow \mer$ be a non minimal $H_{1}-$disk with regular boundary $\Gamma$. Suppose that $\phi$ meets transversally an Abresch-Rosenberg $H_{2}-$surface $\Omega$ along  $\Gamma$ at a constant angle.  Assume also that $\Gamma$ is of one of the following types:

\begin{enumerate}
\item $\Gamma$ is an horizontal curve.
\item  $\Gamma$ is a vertical curve of the immersion $\phi$ and the surface $\Omega$.
\item If $H_{1}=H_{2}$, and $\nu_{1} =-\nu _2$ along $\Gamma$. 
\end{enumerate}

Then, $\phi(\overline{\mathbb{D}})$ is part of an Abresch-Rosenberg surface in $\mer$. 
\end{theorem}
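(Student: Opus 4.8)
The plan is to reduce the statement to the Key Lemma (Lemma~\ref{key}) together with the classification of Abresch-Rosenberg surfaces (Theorem~\ref{QVa}), using the holomorphicity of the Abresch-Rosenberg differential and a reflection argument across the boundary. First I would set $\Sigma_1 = \phi(\overline{\mathbb{D}})$ and $\Sigma_2 = \Omega$. Since $\Omega$ is an Abresch-Rosenberg surface, its Abresch-Rosenberg differential $\mathcal{Q}^{AR}_2$ vanishes identically; in particular every curve on $\Omega$, and hence $\Gamma$, is trivially an AR-line of curvature for $\Sigma_2$ (by Proposition~\ref{A-R}, the imaginary part of $\mathcal{Q}^{AR}_2$ vanishes along $\Gamma$ because $\mathcal{Q}^{AR}_2 \equiv 0$). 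The hypothesis that $\phi$ meets $\Omega$ transversally along $\Gamma$ at a constant angle gives condition~a) of the Key Lemma, namely $\meta{N_1}{N_2}$ constant.

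Next I would invoke Corollary~\ref{CoroDCF}: in each of the three cases listed (horizontal $\Gamma$; vertical $\Gamma$ for both $\phi$ and $\Omega$; $H_1 = H_2$ with $\nu_1 = -\nu_2$), the corollary shows precisely that condition~b) of the Key Lemma is satisfied, so that $\Gamma$ being an AR-line of curvature for $\Sigma_2$ forces $\Gamma$ to be an AR-line of curvature for $\Sigma_1 = \phi(\overline{\mathbb{D}})$. By Proposition~\ref{A-R} again, this means the imaginary part of $\mathcal{Q}^{AR}_1 = Q^{AR}\,dz^2$ vanishes along $\Gamma = \partial \mathbb{D}$.

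The analytic core is then a boundary reflection argument. Since $\phi$ is a non-minimal $H_1$-surface, $\mathcal{Q}^{AR}_1$ is a holomorphic quadratic differential on $\mathbb{D}$ by Theorem~\ref{thQH}, so $Q^{AR}(z)$ is a holomorphic function on $\mathbb{D}$, continuous up to $\partial\mathbb{D}$. After a conformal change of coordinate one may assume the boundary is a straight segment of the real axis (or work directly on the disk with the Schwarz reflection adapted to $dz^2$): the vanishing of $\mathrm{Im}(Q^{AR}\,dz^2)$ along $\Gamma$ is the reality condition that allows one to extend $Q^{AR}$ holomorphically by Schwarz reflection across $\Gamma$. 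Because $\Gamma$ is the entire boundary of the disk, the reflected function is holomorphic on a neighborhood of $\overline{\mathbb{D}}$, hence on all of $\mathbb{C}$ up to the natural identifications; combined with the fact that a holomorphic quadratic differential that is real on $\partial \mathbb{D}$ and bounded must actually be constant — and in fact forced to vanish, since its total "flux" argument (the Poincaré–Hopf / winding count for the line field defined by $\mathrm{Im}\,\mathcal{Q}^{AR} = 0$ on a disk) leaves no room for zeros of odd order on the boundary unless $Q^{AR} \equiv 0$. Thus $\mathcal{Q}^{AR}_1 \equiv 0$ on $\mathbb{D}$.

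Finally, once $\mathcal{Q}^{AR}_1$ vanishes identically, $\phi(\overline{\mathbb{D}})$ is an open piece of a complete $H_1$-surface with vanishing Abresch-Rosenberg differential, so by Theorem~\ref{QVa} and Definition~\ref{Def:ARSurface} it is part of an Abresch-Rosenberg surface in $\mer$, which is the claim. I expect the main obstacle to be the reflection/extension step: one must handle the conformal factor carefully so that the Schwarz reflection is performed on the quadratic differential (not merely on a function), check that the regularity of $\Gamma$ and the constant-angle condition give enough boundary regularity for $Q^{AR}$ to extend continuously, and rule out isolated boundary zeros of $Q^{AR}$ of the wrong order — this is where the "non-minimal" hypothesis and a careful index computation for the line field on the disk enter, and it is the delicate point of the argument.
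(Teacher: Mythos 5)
Your proposal follows essentially the same route as the paper: Corollary \ref{CoroDCF} (hence the Key Lemma) shows $\Gamma$ is an AR-line of curvature of the immersion, Proposition \ref{A-R} gives $\mathrm{Im}(\mathcal{Q}^{AR})=0$ along $\partial\mathbb{D}$, Schwarz reflection forces $\mathcal{Q}^{AR}\equiv 0$, and Theorem \ref{QVa} concludes. The only cosmetic difference is your closing ``flux/Poincar\'e--Hopf'' aside, which is unnecessary: after reflection the cleanest finish is to note that $z^{2}Q^{AR}(z)$ is real on $\abs{z}=1$, extends to a bounded entire function vanishing at the origin, and is therefore identically zero by Liouville.
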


\begin{proof}
Set $\phi(\mathbb{S}^{1})=\Gamma$. Since $\Gamma$ is an AR-line of curvature of the Abresch-Rosenberg surface $\Omega$, then from the hypothesis and Corollary \ref{CoroDCF}, $\gamma$ is an AR-line of curvature of the immersion $\phi$. So, from Proposition \ref{A-R}, the imaginary part of the Abresch-Rosenberg differential must be zero along $\gamma$. Finally, the Schwarz Reflection Principle implies that the Abresch-Rosenberg differential must be zero on $\phi(\overline{\mathbb{D}})$ and Theorem \ref{QVa} gives the result. 
\end{proof}

\begin{example} In $\mathbb{H}^{2} \times \mathbb{R}$. Consider the totally geodesic plane $\Sigma$ parametrized as  
$$\psi(x,y) = (\cosh(x),0,\sinh(x),y)$$  where $x,y \in \mathbb{R}$ and the rotational sphere $\Omega$ with constant mean curvature $\frac{1}{\sqrt{2}}$ parametrized as
$$\varphi(u,v)=(\cosh(r(u)), \sinh(r(u))\cos(v), \sinh(r(u))\sin(v),h(u)) $$
where $(u,v) \in [-1,1] \times \mathbb{R}/2\pi$, 
 $r(u) = 2 {\rm arcsinh}(\sqrt{1-u^{2}})$ and $h(u)=\frac{4}{\sqrt{2}} {\rm arcsin}(\frac{\sqrt{2}}{2}u)$.\\

Observe that the surface $\Sigma$ is an example of a minimal surface whose Abresch-Rosenberg differential does not  vanish [see \cite{Rosen},section 4], moreover, a straightforward computation shows that  $$N_{\psi}(x,y)= (0,1,0,0)$$
and $$N_{\varphi}(u,v)= (h'(u) \sinh(r(u)), h'(u) \cos(v) \cosh(r(u)), h'(u) \cosh(r(u)) \sin(v), -r'(u))$$

are the normal vector fields of $\psi$ and $\varphi$ respectively. \\

Now, the intersection set $C =\Sigma \cap \Omega$ is the curve parametrized by $\gamma$ as
$$\gamma(s) = (\cosh(r(s)), 0 , \sinh(r(s))\sin(t), h(s)),$$ where $s \in [-1,1]$ and  $t=\frac{\pi}{2}$ or $t=\frac{3 \pi}{2}$. So, computing $\la N_{\psi}, N_{\varphi} \ra$ along the curve $\gamma$, we find that
$$ \la N_{\psi}, N_{\varphi} \ra |_{\gamma(s)} = h'(s) \cos(t) \cosh(r(s)) = 0.$$

Hence, along $\gamma$, the surfaces $\Sigma$ and $\Omega$ intersect at constant angle. \\

Finally, we define the surface $\Gamma$ that is the part of the totally geodesic plane $\Sigma$ inside of the rotational surface $\Omega$ after of the intersection $\Sigma \cap \Omega$. Then we have that $\partial \Gamma \equiv C$  and there exists an immersion $\phi:\mathbb{D} \rightarrow \Sigma$ such that 
\begin{itemize}
\item $\phi(\mathbb{D}) = \Gamma$,
\item $\phi(\mathbb{S}^{1})= C$ and
\item $\phi$ meets $\Omega$ along the curve $C$ at a constant angle,
\end{itemize}but $\phi$ is not a part of an Abresch-Rosenberg surface.\\
\end{example}

In Theorem \ref{Coro2} we have omitted two cases:

\begin{enumerate} 
\item  When $\Omega$ is a minimal Abresch-Rosenberg surface in $\mathbb{M}^{2}(\k) \times \mathbb{R}$ then $\Omega$ is a slice by Theorem \ref{QVa}. When $\Omega$ is a slice was studied in \cite[Theorem 9]{Cal}.
\item When the immersion $\phi$ is a minimal disk and $\Gamma$ is horizontal, one can solve this case using the Maximum principle, comparing $\phi$ with a slice. 
\end{enumerate}

\begin{remark}
Theorem \ref{Coro2} generalizes the classification result  given by Do Carmo and Fernandez in \cite[Corollary 4.1]{dCF} for immersed compact disks with constant mean curvature. We only assume that $\Gamma$ is horizontal, without assuming that $\Gamma$ is a line of curvature of the second fundamental form of the immersion.
\end{remark}

\subsection{Immersed compact disks in $\hmf$, $\t \neq 0$}
Now, we deal with $H-$ disks in $\hmf$, $\t  \neq 0$. We remember that for this class of immersions, we consider the Abresch-Rosenberg shape operator on $\phi(\mathbb{D})$ defined by formula \eqref{ARTraceless}. Then, using  Corollary \ref{CorodCF}, we extend the above classification result for the case $\t \neq 0$.

\begin{theorem}
\label{tNoZ}
Let $\phi: \mathbb{D} \rightarrow \hmf$, $\t \neq 0$, be a $H-$disk with regular boundary, suppose the boundary is parametrized by a regular curve $\gamma$ and it is of one of the following types
\begin{enumerate}
\item $\gamma$ is the tangent intersection of the immersion $\phi$ with an Abresch-Rosenberg surface $\Omega$ with the same mean curvature vector.
\item $\gamma$ is the transverse intersection with constant angle of the immersion $\phi$ with an Abresch-Rosenberg surface $\Omega$ with the same mean curvature and whose angle function is opposite to the angle function of the immersion $\phi$ along $\gamma$.
\end{enumerate}
Then, $\phi(\mathbb{D})$ is a part of an Abresch-Rosenberg surface in $\hmf$.
\end{theorem}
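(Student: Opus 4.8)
The plan is to run exactly the same argument as in Theorem~\ref{Coro2}, but now feeding the hypotheses into Corollary~\ref{CorodCF} instead of Corollary~\ref{CoroDCF}. Write $\Gamma = \phi(\mathbb{S}^{1}) = \gamma(\mathbb{S}^1)$ and let $\Omega$ be the Abresch--Rosenberg surface appearing in the hypotheses. Since $\Omega$ is an Abresch--Rosenberg surface, $\mathcal{Q}^{AR}$ vanishes identically on $\Omega$; in particular its imaginary part vanishes along $\Gamma$, so by Proposition~\ref{A-R} the curve $\Gamma$ is an AR-line of curvature for $\Omega$. Now observe that in case~1 the immersion $\phi$ and $\Omega$ are tangent along $\Gamma$ (a degenerate ``constant angle'' intersection, with $N_1 = N_2$ there, since they share the mean curvature \emph{vector}), which is precisely hypothesis~1 of Corollary~\ref{CorodCF}; and in case~2 the intersection is transversal at constant angle with $\nu_1 = -\nu_2$ along $\Gamma$, which is precisely hypothesis~2 of Corollary~\ref{CorodCF}. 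In either case Corollary~\ref{CorodCF} applies and yields that $\Gamma$ is an AR-line of curvature for $\phi(\mathbb{D})$ as well.

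The next step is to pass from ``imaginary part of $\mathcal{Q}^{AR}$ vanishes on $\Gamma$'' to ``$\mathcal{Q}^{AR}$ vanishes on all of $\phi(\mathbb{D})$''. By Proposition~\ref{A-R} applied to $\phi$, the fact that $\Gamma$ is an AR-line of curvature means $\mathrm{Im}\,\mathcal{Q}^{AR} \equiv 0$ along $\partial\mathbb{D}$. By Theorem~\ref{thQH}, $Q^{AR}dz^2$ is holomorphic for the conformal structure induced by $I$ on $\mathbb{D}$; equivalently $Q^{AR}$ is a holomorphic function of $z$ on $\mathbb{D}$ (after fixing the global conformal coordinate on the disk). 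A holomorphic function on $\mathbb{D}$ whose boundary values are real along $\partial\mathbb{D}$ must be constant by the Schwarz Reflection Principle applied across the unit circle: reflect $z \mapsto 1/\bar z$ to extend $Q^{AR}$ holomorphically to $\mathbb{C}\cup\{\infty\}$, hence it is a bounded entire function, hence constant; and since $\mathrm{Im}\,Q^{AR} = 0$ on the circle, the constant is real. (One must be slightly careful: $dz^2$ is not globally defined, so the clean statement is that the real quadratic differential $\mathrm{Im}(Q^{AR}dz^2)$ vanishes on the boundary and $Q^{AR}dz^2$ is holomorphic; the reflection argument is the standard one, and is exactly the step invoked in the proof of Theorem~\ref{Coro2}.) This forces $\mathcal{Q}^{AR} \equiv 0$ on $\mathbb{D}$, because a nonzero holomorphic quadratic differential on the disk with real boundary values would have to be a nonzero real constant multiple of $dz^2$, which does not have vanishing imaginary part on the whole circle — so in fact $\mathcal{Q}^{AR} \equiv 0$.

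Finally, once $\mathcal{Q}^{AR} \equiv 0$ on $\phi(\mathbb{D})$, we are in position to apply Theorem~\ref{QVa} (or rather Definition~\ref{Def:ARSurface} together with it): the disk extends to a complete surface on which $\mathcal{Q}^{AR}$ vanishes, which is an Abresch--Rosenberg surface, and hence $\phi(\mathbb{D})$ is a piece of such a surface. I expect the main obstacle to be the bookkeeping in case~1: ``tangent intersection with the same mean curvature vector'' has to be translated correctly into the hypothesis ``$N_1 = N_2$ along $\Gamma$'' of Corollary~\ref{CorodCF}, and one should check that the AR shape operators $S^1_{AR}$ and $S^2_{AR}$ genuinely agree along $\Gamma$ (this uses $\mathbf{t}_1 \equiv \mathbf{t}_2$ and $\theta_1 = \theta_2$ there), so that being an AR-line of curvature transfers. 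The transversal case~2 is covered verbatim by the computation already carried out in the proof of Corollary~\ref{CorodCF}. The reflection step is standard and identical to the $\tau = 0$ case.
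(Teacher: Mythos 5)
Your proposal follows essentially the same route as the paper's proof: identify $\Gamma$ as an AR-line of curvature of $\Omega$ via Proposition~\ref{A-R}, transfer this to $\phi$ via Corollary~\ref{CorodCF}, and then use holomorphicity of $\mathcal{Q}^{AR}$ plus the Schwarz reflection argument and Theorem~\ref{QVa} to conclude. The only difference is that you spell out the reflection step (and the quadratic-differential bookkeeping) in more detail than the paper does, and you do so correctly.
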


\begin{proof}
Set $\phi(\mathbb{S}^{1})=\Gamma$. Since $\Gamma$ is an AR-line of curvature of the Abresch-Rosenberg surface $\Omega$, then from hypothesis and Corollary \ref{CorodCF}, $\gamma$ is an AR-line of curvature of the immersion $\phi$. So, from Proposition \ref{A-R}, the imaginary part of the Abresch-Rosenberg differential must be zero along $\gamma$. Finally, the Schwarz Reflection Lemma implies that the Abresch-Rosenberg differential must be zero on $\phi(\overline{\mathbb{D}})$ and Theorem \ref{QVa} gives the result.
\end{proof}

\subsection{Immersed compact disks with non-regular boundary in $\hmf$.}
\label{CompaNoreg}

Now, we will study $H-$disks $\phi: \mathbb{D} \rightarrow \hmf$ with piece-wise regular boundary. Indeed, we suppose that $\phi(\mathbb{D})$ is contained in the interior of a differentiable surface without boundary in $\hmf$.\\

First, we will recall a result that gives conditions for a disk type surface to be umbilical with respect to a Codazzi pair with constant mean curvature.

\begin{theorem}[\cite{ER3}]
\label{ESP-FER}
Let $\Sigma$ be a compact disk with piece-wise smooth boundary. We will call the vertices of the surface to the finite set of non-regular boundary points. Assume that $\Sigma$ is contained as an interior set in a differentiable surface $\hat{\Sigma}$ without boundary.

Let $(I,II)$ be a Codazzi pair with constant mean curvature $H(I,II)$ on $\hat{\Sigma}$. Assume also that the following conditions holds:
\begin{enumerate}
\item The number of vertices in $\partial \Sigma$ with an angle $< \pi$ (measured with respect to the metric $I$) is less than 3.
\item The regular curves in $\partial \Sigma$ are lines of curvature for the pair $(I,II)$
\end{enumerate}
Then $\Sigma$ is totally umbilical for the pair $(I,II)$.
\end{theorem}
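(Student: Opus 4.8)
The plan is to turn the statement into the vanishing of a holomorphic quadratic differential and then rule out a non-trivial one by a winding-number count in which only vertices of small angle can contribute positively. Concretely, I would first reduce to showing that the Hopf differential $Q\,dz^{2}:=Q(I,II)\,dz^{2}$ of the pair vanishes identically on $\Sigma$: by Milnor's Lemma~\ref{l0.1}, a Codazzi pair with constant mean curvature has holomorphic Hopf differential, so $Q\,dz^{2}$ is holomorphic for the conformal structure of $I$ on all of $\hat{\Sigma}$; since the umbilic points of $(I,II)$ are exactly its zeros, ``$\Sigma$ totally umbilical for $(I,II)$'' means ``$Q\,dz^{2}\equiv0$ on $\Sigma$''. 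I would also record, by the computation of Proposition~\ref{A-R} carried out for the general pair $(I,II)$ in place of the Abresch--Rosenberg pair, that a regular curve is a line of curvature for $(I,II)$ precisely when $\mathrm{Im}(Q\,dz^{2})$ vanishes along it. Then I assume, towards a contradiction, that $Q\,dz^{2}\not\equiv0$, so that its zeros are isolated and finitely many inside the compact disk $\Sigma$.

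Next I would uniformize $\Sigma$ conformally as $\overline{\mathbb{D}}$, the vertices becoming points $p_{1},\dots,p_{n}\in\partial\mathbb{D}$ in cyclic order and the regular arcs becoming open arcs $\gamma_{1},\dots,\gamma_{n}$ of $\partial\mathbb{D}$; write $Q\,dz^{2}=Q(z)\,dz^{2}$ in the disk coordinate. Because $\hat{\Sigma}$ is a smooth boundaryless surface containing $\Sigma$ as an interior set, $Q(z)$ is holomorphic on $\mathbb{D}$ and extends holomorphically across each open arc, and each $p_{j}$ is an interior point of $\hat{\Sigma}$ at which $Q\,dz^{2}$ is holomorphic \emph{in the conformal structure of $\hat{\Sigma}$}. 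Writing it there as $\tilde{Q}(w)\,dw^{2}$ in a conformal chart $w$ of $\hat{\Sigma}$ centered at $p_{j}$, with $\tilde{Q}$ holomorphic and vanishing to order $l_{j}\ge0$, and using that — since the interior angle of $\Sigma$ at $p_{j}$ in the metric $I$ equals $\theta_{j}$ — the composition of the uniformizing chart with $w$ has the form $\zeta\mapsto\zeta^{\theta_{j}/\pi}$ to leading order, I obtain $Q(z)\sim\mathrm{const}\cdot(z-p_{j})^{e_{j}}$ near $p_{j}$, with $e_{j}=(l_{j}+2)\theta_{j}/\pi-2$.

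Then I would count, by the argument principle along $\partial\mathbb{D}$ with small semicircular detours into $\mathbb{D}$ around each $p_{j}$ and around each of the finitely many zeros of $Q$ lying on an open arc, the number $Z\ge0$ of zeros of $Q$ inside $\mathbb{D}$ (there are no poles, $Q$ being holomorphic on $\mathbb{D}$). On each arc the line-of-curvature hypothesis gives $Q(z)\,dz^{2}\in\mathbb{R}$, hence $\arg Q(z)=\mathrm{const}-2\arg z$ there, so the arcs together contribute $-2\cdot2\pi=-4\pi$ to the variation of $\arg Q$; the detour around $p_{j}$ contributes $-\pi e_{j}=2\pi-(l_{j}+2)\theta_{j}$, and each arc-zero of order $m$ contributes $-\pi m\le0$. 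This gives
\[
2\pi Z=-4\pi+\sum_{j=1}^{n}\bigl(2\pi-(l_{j}+2)\theta_{j}\bigr)-\pi\!\!\sum_{\text{arc-zeros}}\!\!m .
\]
To finish, I observe that each summand $2\pi-(l_{j}+2)\theta_{j}$ is $<2\pi$ (as $\theta_{j}>0$, $l_{j}\ge0$) and is strictly positive only when $\theta_{j}<2\pi/(l_{j}+2)\le\pi$, i.e. only at a vertex of interior angle $<\pi$ in $I$; by hypothesis there are at most two of these, so the sum is $<4\pi$ and hence $2\pi Z<0$, contradicting $Z\ge0$. Therefore $Q\,dz^{2}\equiv0$ and $\Sigma$ is totally umbilical for $(I,II)$. (As a check, the same count with $n=0$ recovers the classical Nitsche-type statement for smooth boundary.)

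The hard part is the bookkeeping in the middle two steps: identifying the exact local exponent $e_{j}$ at each vertex, which forces one to relate the angle measured in the metric $I$ to the order of the holomorphic quadratic differential in the conformal structure of $\hat{\Sigma}$ — this is precisely where the hypothesis on the number of small-angle vertices enters — and fixing all orientation and sign conventions in the winding-number computation (direction of the detours, exterior-angle contributions at the corners, and the ambiguity modulo $\pi$ of $\arg Q$ on the locus where $Q\,dz^{2}$ is real). Once these local models are pinned down, the global count and the resulting contradiction are routine.
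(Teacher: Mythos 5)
The paper does not actually prove Theorem \ref{ESP-FER}: it is quoted from the reference \cite{ER3} and used as a black box, so there is no in-paper proof to compare against. Your winding-number argument is, in substance, the proof given in that reference (and in its Nitsche/Choe antecedents): holomorphicity of $Q\,dz^2$ from Milnor's Lemma \ref{l0.1}, reality of $Q\,dz^2$ along the regular boundary arcs via the line-of-curvature characterization (the computation of Proposition \ref{A-R} is indeed valid for an arbitrary fundamental pair, since only the off-diagonal and trace-free parts of $S$ enter), and an argument-principle count with the corner exponent $e_j=(l_j+2)\theta_j/\pi-2$. The bookkeeping is right: the smooth boundary contributes $-4\pi$, a corner contributes $2\pi-(l_j+2)\theta_j$, which is always $<2\pi$ and is positive only if $\theta_j<\pi$, so with at most two small-angle vertices the total variation is negative, contradicting $Z\ge 0$ unless $Q\equiv 0$. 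The only places genuinely requiring the care you flag are (i) the orientation of the inward indentations, each contributing $-\pi$ times the local order (including the boundary zeros on the open arcs, which you correctly show can only help), and (ii) the identification of the angle measured in $I$ with the conformal angle in the chart of $\hat{\Sigma}$, which is what makes the exponent $e_j$ well defined; both are standard, and your proposal is correct.
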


If $\Sigma$ is a $H-$ surface in $\hmf$, therefore the fundamental pair $(I,II_{S})$ defined by equation \eqref{Oper} when $\t=0$ and by equation \eqref{ARTraceless} when $\t \neq 0$ is a Codazzi pair and the mean curvature of pair $H(I,II_{S})$  is constant. Then, using Theorem \ref{ESP-FER} we obtain the following:

\begin{theorem}
\label{CoroSR}
Let $\Sigma$ be a $H-$ disk in $\hmf$ with piece-wise differentiable boundary. Assume also that the following conditions are satisfied:

\begin{enumerate}
\item $\Sigma$ is contained as an interior set in a smooth $H$-surface $\hat{\Sigma}$ in $\hmf$ without boundary.
\item The number of vertices in $\partial \Sigma$ with angle $<$ $\pi$ is less than or equal to 3.
\item The regular curves in $\partial \Sigma$ are AR-lines of curvatures of $\Sigma$.
\end{enumerate}
Then, $\Sigma$ is a part of an Abresch-Rosenberg surface in $\hmf$.
\end{theorem}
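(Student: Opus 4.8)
The proof of Theorem \ref{CoroSR} is a direct application of the abstract result Theorem \ref{ESP-FER} to the particular Codazzi pair associated to the Abresch-Rosenberg differential. The plan is to take as the fundamental pair $(I,II_S)$ the pair defined by \eqref{Oper} when $\t=0$ and by \eqref{ARTraceless} when $\t\neq 0$, where $I$ is the first fundamental form of the ambient $H$-surface $\hat\Sigma$ and $S=S_{AR}$ is the Abresch-Rosenberg shape operator of $\hat\Sigma$. As recalled in Section 2 (following \cite{AEG3} and \cite{ER4}), this pair is a Codazzi pair on $\hat\Sigma$ and has constant mean curvature $H(I,II_{AR})=0$ (it is traceless), so it satisfies the structural hypotheses required to invoke Theorem \ref{ESP-FER}.

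First I would check that the three hypotheses of Theorem \ref{CoroSR} translate verbatim into the three hypotheses of Theorem \ref{ESP-FER}. Hypothesis (1) of Theorem \ref{CoroSR} states that $\Sigma$ sits as an interior set in a boundaryless smooth $H$-surface $\hat\Sigma$; this is exactly the ambient surface $\hat\Sigma$ on which the Codazzi pair $(I,II_S)$ is defined, so the requirement ``$\Sigma\subset\hat\Sigma$ interior'' of Theorem \ref{ESP-FER} holds. Hypothesis (2), that the number of vertices of $\partial\Sigma$ with interior angle $<\pi$ (measured in $I$) is at most $3$, is literally condition (1) of Theorem \ref{ESP-FER} — here one uses that the metric $I$ of the pair is precisely the induced Riemannian metric of $\hat\Sigma$, so the angle measurement is the same. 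Hypothesis (3), that the regular boundary curves of $\partial\Sigma$ are AR-lines of curvature of $\Sigma$, means by Definition \ref{Def5} that the tangent vectors of these curves are eigenvectors of $S_{AR}$; since $II_S(X,Y)=I(S_{AR}X,Y)$, being an eigendirection of $S_{AR}$ is the same as being a line of curvature for the pair $(I,II_S)$, which is condition (2) of Theorem \ref{ESP-FER}.

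With all hypotheses verified, Theorem \ref{ESP-FER} yields that $\Sigma$ is totally umbilical for the pair $(I,II_S)$. The final step is to turn ``umbilical for $(I,II_{AR})$'' into the desired conclusion. Since the pair is traceless, umbilicity means $S_{AR}\equiv 0$ on $\Sigma$, hence the Hopf differential of the pair vanishes identically on $\Sigma$; but this Hopf differential is (up to the nonzero constant factor $1$ when $\t=0$, or $H+i\t$ when $\t\neq0$) the Abresch-Rosenberg differential $\mathcal Q^{AR}$. Therefore $\mathcal Q^{AR}\equiv 0$ on $\Sigma$. Because $\mathcal Q^{AR}$ is holomorphic on the underlying $H$-surface $\hat\Sigma$ (Theorem \ref{thQH}) and $\Sigma$ is an open subset of the connected surface $\hat\Sigma$, the identity principle gives $\mathcal Q^{AR}\equiv 0$ on all of $\hat\Sigma$; since $\hat\Sigma$ is a complete (boundaryless) $H$-surface with vanishing Abresch-Rosenberg differential, Theorem \ref{QVa} and Definition \ref{Def:ARSurface} identify $\hat\Sigma$, and hence $\Sigma$, as part of an Abresch-Rosenberg surface in $\hmf$.

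I do not expect a genuine obstacle here; the content of the theorem is entirely in the abstract Codazzi-pair machinery of \cite{ER3} (Theorem \ref{ESP-FER}) and in the earlier identification of $(I,II_{AR})$ as a constant-mean-curvature Codazzi pair whose Hopf differential is $\mathcal Q^{AR}$. The only point requiring a little care is the bookkeeping in the last paragraph: one should make sure $\hat\Sigma$ is genuinely complete (or at least connected and boundaryless, so that holomorphicity plus the identity principle propagate the vanishing of $\mathcal Q^{AR}$ from $\Sigma$ to $\hat\Sigma$) before quoting the classification Theorem \ref{QVa}, and one should note that the constant relating $II_{AR}$ and $\mathcal Q^{AR}$ is nonzero in both the $\t=0$ and $\t\neq0$ cases so that vanishing of one is equivalent to vanishing of the other.
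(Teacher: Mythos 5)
Your proposal is correct and follows exactly the route the paper intends: the paper gives no written proof beyond the remark that $(I,II_S)$ from \eqref{Oper} (resp.\ \eqref{ARTraceless}) is a traceless Codazzi pair, so that Theorem \ref{ESP-FER} applies directly. Your additional bookkeeping --- identifying umbilicity of the traceless pair with the vanishing of $\mathcal Q^{AR}$ up to a nonzero constant, and propagating this to $\hat\Sigma$ before invoking Theorem \ref{QVa} --- fills in details the paper leaves implicit but does not change the argument.
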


Theorem \ref{Coro2} shows that if a $H-$ disk in $\mer$ with horizontal curve $\Gamma$ as boundary meets an Abresch-Rosenberg surface at a constant angle along $\Gamma$, then $\Gamma$ is an AR- line of curvature of the immersion, hence Theorem \ref{CoroSR} implies the following corollary:

\begin{Corollary}
\label{NR1}
Let $\phi: \mathbb{D} \rightarrow \mer$ be a non minimal $H-$ disk, with piece-wise differentiable boundary $\Gamma$ that meets transversally an Abresch-Rosenberg $H_{2}-$ surface $\Omega$ along $\Gamma$ at a constant angle. Assume also that the following conditions are satisfied:

\begin{enumerate}
\item $\phi(\mathbb{D})$ is contained as an interior set in a smooth $H-$surface $\hat{\Sigma}$ in $\hmf$ without boundary.
\item The number of vertices in $\Gamma$ with angle $<$ $\pi$ is less than or equal to 3.
\item Every regular component $\gamma$ of $\Gamma$ is a one of following types: 
\begin{itemize}
\item $\gamma$  is a horizontal curve of $\Omega$.
\item $\gamma$ is a vertical curve of immersion $\phi$ and the surface $\Omega$.
\item If $H=H_{2}$ then the angle function of immersion $\phi$ is opposite to the angle function of $\Omega$ along $\gamma$. 
\end{itemize}
\end{enumerate}

Then, $\phi(\mathbb{D})$ is a part of an Abresch-Rosenberg surface in $\mer$.

\end{Corollary}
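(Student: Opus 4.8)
The plan is to verify the three hypotheses of Theorem \ref{CoroSR} one at a time, so that the conclusion follows immediately. Hypotheses (1) and (2) of Corollary \ref{NR1} are, verbatim, hypotheses (1) and (2) of Theorem \ref{CoroSR} (the ambient smooth $H$-surface $\hat\Sigma$ containing $\phi(\mathbb{D})$ as an interior set, and the bound of at most three vertices of angle $<\pi$ measured in the first fundamental form), so nothing needs to be done there. All the work is in checking hypothesis (3) of Theorem \ref{CoroSR}: that every regular component $\gamma$ of the boundary curve $\Gamma$ is an AR-line of curvature of $\phi(\mathbb{D})$.

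Here I would argue as follows. Fix a regular component $\gamma$ of $\Gamma$. By hypothesis $\gamma$ lies in $\Omega$, which is an Abresch-Rosenberg surface; hence its Abresch-Rosenberg differential $\mathcal{Q}^{AR}$ vanishes identically on $\Omega$, and in particular its imaginary part vanishes along $\gamma$. By Proposition \ref{A-R}, this says exactly that $\gamma$ is an AR-line of curvature for $\Omega$. Now I want to transfer this to $\phi(\mathbb{D})$ via the Key Lemma (Lemma \ref{key}), which requires checking the two conditions a) and b) along $\gamma$. Condition a), that $\meta{N_1}{N_2}$ is constant along $\gamma$, is immediate because $\phi$ meets $\Omega$ along $\Gamma$ at a constant angle by hypothesis. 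Condition b) is precisely the identity supplied, case by case, by Corollary \ref{CoroDCF}: in the first sub-case $\gamma$ is a horizontal curve (of $\Omega$, hence the relevant tangential components of $\xi$ against the normals vanish as in \eqref{eqt33}); in the second sub-case $\gamma$ is vertical for both $\phi$ and $\Omega$, so $\mt_1=\mt_2=\gamma'$ and both sides of b) vanish; in the third sub-case $H=H_2$ and $\nu_1=-\nu_2$ along $\gamma$, which is exactly condition 3 of Corollary \ref{CoroDCF}. (Note that $\t=0$ throughout, so $\mt^i_{\theta_i}=\mt_i$ and one is in the setting of Corollary \ref{CoroDCF} rather than Corollary \ref{CorodCF}.) In each case Corollary \ref{CoroDCF} — applied with $\Sigma_1=\Omega$ (for which we already know $\gamma$ is an AR-line of curvature) and $\Sigma_2=\phi(\mathbb{D})$ — yields that $\gamma$ is an AR-line of curvature for $\phi(\mathbb{D})$.

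Having verified hypothesis (3), Theorem \ref{CoroSR} applies to $\Sigma=\phi(\mathbb{D})$ inside $\hat\Sigma$ with the Codazzi pair $(I,II_{AR})$ (which is a genuine Codazzi pair with constant mean curvature $H(I,II_{AR})=0$ by the discussion in Section 2.5), and gives that $\phi(\mathbb{D})$ is totally umbilical for that pair; equivalently its Hopf differential, the Abresch-Rosenberg differential $\mathcal{Q}^{AR}$, vanishes identically on $\phi(\mathbb{D})$. By Definition \ref{Def:ARSurface} (together with the completeness furnished by viewing $\phi(\mathbb{D})\subset\hat\Sigma$), $\phi(\mathbb{D})$ is a part of an Abresch-Rosenberg surface in $\mer$, as claimed. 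The only genuinely delicate point is the bookkeeping in applying Corollary \ref{CoroDCF} with the roles of $\Sigma_1$ and $\Sigma_2$ chosen so that the surface whose AR-line-of-curvature status is \emph{known} is the hypothesis surface: one must make sure that "non minimal" is used where Corollary \ref{CoroDCF}'s case 3 needs $H\neq 0$, and that the transversality/constant-angle assumptions line up; once the orientation of $\gamma$ is fixed as in the proof of Lemma \ref{key} (so that $(1-d^2)^{-1}N_1\wedge N_2=\gamma'$), everything matches.
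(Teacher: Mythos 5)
Your proposal is correct and follows essentially the same route as the paper: the paper derives Corollary \ref{NR1} by observing that the argument of Theorem \ref{Coro2} (i.e.\ Corollary \ref{CoroDCF} applied with $\Sigma_1=\Omega$, whose Abresch--Rosenberg differential vanishes, and $\Sigma_2=\phi(\mathbb{D})$) makes each regular boundary component an AR-line of curvature, and then invokes Theorem \ref{CoroSR}. Your write-up simply makes explicit the case-by-case verification of condition b) of the Key Lemma that the paper leaves implicit.
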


Theorem \ref{tNoZ}, shows that if a $H-$ disk in $\hmf$, $\t \neq 0$, has regular boundary, then under certain conditions over $\Gamma$, we conclude that $\Gamma$ is an AR- line of curvature of the immersion. So, using Theorem \ref{CoroSR}, we obtain:

\begin{Corollary}
\label{NR2}
Let $\phi: \mathbb{D} \rightarrow \hmf$, $\t \neq 0$, be a $H-$disk with piece-wise differentiable boundary $\Gamma$. Assume also that the following conditions are satisfied:

\begin{enumerate}
\item $\phi(\mathbb{D})$ is contained as an interior set in a smooth $H-$surface $\hat{\Sigma}$ in $\hmf$ without boundary.
\item The number of vertices in $\Gamma$ with angle $<$ $\pi$ is less than or equal to 3.
\item Every regular component $\gamma$ of $\Gamma$ is one of the following types:
\begin{itemize}
\item $\gamma$  is a tangent intersection of $\phi(\mathbb{D})$ with an Abresch-Rosenberg surface $\Omega$ with the same mean curvature vector. 
\item $\gamma$ is a transverse intersection with constant angle of $\phi(\mathbb{D})$ with an Abresch-Rosenberg surface $\Omega$ with the same constant mean curvature and whose angle function is opposite to the angle function $\phi(\mathbb{D})$ along $\gamma$. 
\end{itemize}
\end{enumerate}

Then, $\phi(\mathbb{D})$  is a part of an Abresch-Rosenberg surface in $\hmf$.\\

\end{Corollary}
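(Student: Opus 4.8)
The plan is to reduce this statement directly to Theorem \ref{CoroSR} by verifying that each of its three hypotheses is met under the present assumptions. Hypotheses (1) and (2) of Theorem \ref{CoroSR} are verbatim hypotheses (1) and (2) here, so nothing is needed for them. The entire content of the argument therefore lies in checking hypothesis (3): that every regular component $\gamma$ of $\Gamma$ is an AR-line of curvature of $\phi(\mathbb{D})$.

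To that end, I would fix a regular component $\gamma$ of the boundary $\Gamma$ and consider separately the two possibilities allowed by hypothesis (3). In the first case, $\gamma$ is the tangent intersection of $\phi(\mathbb{D})$ with an Abresch-Rosenberg surface $\Omega$ sharing the same mean curvature vector. Since $\Omega$ is an Abresch-Rosenberg surface, its Abresch-Rosenberg differential $\mathcal{Q}^{AR}$ vanishes identically by Definition \ref{Def:ARSurface}; in particular its imaginary part vanishes along $\gamma$, so by Proposition \ref{A-R} the curve $\gamma$ is an AR-line of curvature of $\Omega$. Now invoke Corollary \ref{CorodCF} with $\Sigma_2 = \Omega$ and $\Sigma_1 = \phi(\mathbb{D})$: the tangency along $\gamma$ together with the shared mean curvature vector gives $N_1 = N_2$ along $\gamma$, which is precisely case (1) of that corollary, so $\gamma$ is an AR-line of curvature of $\phi(\mathbb{D})$ as well. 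In the second case, $\gamma$ is a transverse intersection at a constant angle with an Abresch-Rosenberg surface $\Omega$ of the same constant mean curvature whose angle function is opposite to that of $\phi(\mathbb{D})$ along $\gamma$; this is exactly case (2) of Corollary \ref{CorodCF}, and again, since $\gamma$ is an AR-line of curvature of $\Omega$ (as $\mathcal{Q}^{AR}_\Omega \equiv 0$), Corollary \ref{CorodCF} yields that $\gamma$ is an AR-line of curvature of $\phi(\mathbb{D})$.

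Having verified that every regular component of $\Gamma$ is an AR-line of curvature of $\phi(\mathbb{D})$, hypothesis (3) of Theorem \ref{CoroSR} holds. Apply Theorem \ref{CoroSR}: since $(I, II_{AR})$ is a Codazzi pair with constant mean curvature $H(I,II_{AR})$ on the ambient smooth $H$-surface $\hat{\Sigma}$ (by the construction in Section 2.6, using formula \eqref{ARTraceless} as $\t \neq 0$), and the vertex condition (2) is satisfied, we conclude that $\phi(\mathbb{D})$ is totally umbilical for the pair $(I, II_{AR})$. Equivalently, the Hopf differential of $(I, II_{AR})$ — which is the Abresch-Rosenberg differential $\mathcal{Q}^{AR}$ up to the nonzero constant $H + i\t$ — vanishes on $\phi(\mathbb{D})$. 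By Theorem \ref{QVa} (or Definition \ref{Def:ARSurface}), a complete $H$-surface with vanishing $\mathcal{Q}^{AR}$ is an Abresch-Rosenberg surface; applying this to $\hat{\Sigma}$, which contains $\phi(\mathbb{D})$ as an interior set, shows $\phi(\mathbb{D})$ is a part of an Abresch-Rosenberg surface in $\hmf$, as claimed.

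The argument is essentially a bookkeeping assembly of earlier results, so there is no serious analytic obstacle; the only point demanding care is that Corollary \ref{CorodCF} is stated for honest $H$-surfaces $\Sigma_1, \Sigma_2$ whereas here $\phi(\mathbb{D})$ is merely an immersed disk contained in $\hat{\Sigma}$. One resolves this by applying Corollary \ref{CorodCF} locally along each regular arc $\gamma$ to the pair $(\hat{\Sigma}, \Omega)$ — both genuine $H$-surfaces — which is legitimate since the AR-line-of-curvature property is a local condition on $\gamma$ and $\phi(\mathbb{D})$ inherits its Abresch-Rosenberg shape operator from $\hat{\Sigma}$. A secondary subtlety is that Corollary \ref{CorodCF} presupposes $H^2 + \t^2 \neq 0$, which is automatic here since $\t \neq 0$.
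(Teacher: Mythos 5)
Your proposal is correct and follows essentially the same route as the paper, which derives Corollary \ref{NR2} by using Corollary \ref{CorodCF} (as in the proof of Theorem \ref{tNoZ}) to show each regular boundary component is an AR-line of curvature and then invoking Theorem \ref{CoroSR}. Your additional remarks on applying Corollary \ref{CorodCF} to the pair $(\hat{\Sigma},\Omega)$ and on $H^{2}+\t^{2}\neq 0$ are sensible clarifications of points the paper leaves implicit.
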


The  author was supported by CAPES-Brazil and by CNPq-Brazil.\\

\end{document}